\newcommand\sectionpage{}
\renewcommand\sectionpage{\newpage}
\newtheorem{lem}{Lemma}[section]
\newtheorem{prop}[lem]{Proposition}
\newtheorem{thm}[lem]{Theorem}
\newtheorem{conj}[lem]{Conjecture}
\newtheorem{qn}[lem]{Question}
\theoremstyle{definition}
\newtheorem{exam}[lem]{Example}
\newtheorem{construction}[lem]{Construction}
\numberwithin{equation}{section}
\numberwithin{table}{section}
\numberwithin{figure}{section}
\newcommand\bbF{\mathbb{F}}
\newcommand\bbZ{\mathbb{Z}}
\newcommand\cL{{\mathcal L}}
\newcommand\cO{{\mathcal O}}
\newcommand\cP{{\mathcal P}}
\newcommand\cS{{\mathcal S}}
\renewcommand\ell{l}
\newcommand\bL{\mathbf{L}}
\newcommand\PR{\ensuremath{\mathbb{PR}}}
\newcommand\PP{\Pi}
\renewcommand\setminus\smallsetminus
\begin{document}


\thispagestyle{empty}

\title{Projective Rectangles: The Graph of Lines}

\author{Rigoberto Fl\'orez}
\thanks{Fl\'orez's research was partially supported by a grant from The Citadel Foundation.}
\address{Dept.\ of Mathematical Sciences, The Citadel, Charleston, South Carolina 29409}
\email{\tt rigo.florez@citadel.edu}

\author{Thomas Zaslavsky}
\address{Dept.\ of Mathematics and Statistics, Binghamton University, Binghamton, New York 13902-6000}
\email{\tt zaslav@math.binghamton.edu}

\date{\today}

\begin{abstract}
A projective rectangle is like a projective plane that may have different lengths in two directions.  We develop properties of the graph of lines, in which adjacency means having a common point, especially its strong regularity and clique structure.  The main construction of projective rectangles, stated in a previous paper, gives rectangles whose graph of lines is a known strongly regular bilinear forms graph.  That fact leads to a proof that the main construction does produce projective rectangles, and also gives a new representation of bilinear forms graphs.  We conclude by mentioning a few simple graph properties, such as the chromatic number, which is not known, and a partial geometry obtained from the graph.
\end{abstract}

\subjclass[2010]{Primary 51E26; Secondary 51A30, 51A45}

\keywords{Projective rectangle; incidence geometry; Pasch axiom; projective plane; strongly regular graph}

\maketitle

\pagestyle{myheadings}
\markboth{Fl\'orez and Zaslavsky}{Projective Rectangles: Graph of Lines}

\setcounter{tocdepth}{3}
\tableofcontents

\sectionpage\section {Introduction}\label{sec:intro}

A projective rectangle, which we introduced in \cite{pr1}, is like a projective plane that may have different sizes in two directions.  Projective rectangles include projective planes as trivial examples, when the two sizes are equal, but otherwise they have properties such as partial Desargues's theorems that are not common to all projective planes.

In this sequel we examine finite projective rectangles through the graph of lines, whose vertices are the short lines with adjacency defined by concurrence in a point.  This graph is strongly regular (Section \ref{sec:graphlines}).  Its clique structure enables us to prove a main theorem in \cite{pr1}, which we were unable to prove by pure incidence geometry, that a ``subplane construction'' in a finite Desarguesian projective plane creates projective rectangles.  The graph of a rectangle resulting from that construction turns out to be a known type of strongly regular graph called a bilinear forms graph, $H_q(2,k)$, where the prime power $q$ is the order of a Desarguesian plane (see Section \ref{sec:srg})---but our construction of the graph appears to be a new one.  

The graph of lines of any finite projective rectangle has the same strong regularity parameters as $H_q(2,k)$ (if we let $q$ be any natural number), but we do not know whether all finite projective rectangles are constructed by the subplane construction nor whether all projective rectangles with the same parameters have isomorphic graphs of lines.  The broadest problem, that of classifying finite projective rectangles, is open.

In Section \ref{sec:properties} we look at simple properties of the graph of lines, such as planarity and chromatic number, finding both answers and questions.
In Section \ref{sec:partialgeometry} we observe that every graph of lines generates a partial geometry using certain maximal cliques called point cliques, but the other maximal cliques, called plane cliques, do not; this is known for $H_q(2,k)$ but we see it is a consequence of the axioms of a projective rectangle.

Why are we studying the graph of lines when the only finite example we know, $H_q(2,k)$, is already well known?  There are two answers.  First, we have a new approach to this graph via the axioms of an incidence geometry.  Second, there may be new examples not isomorphic to any $H_q(2,k)$.  We think the first reason justifies our study, and we hope that in future examples may be discovered that are presently unknown.

\sectionpage
\section {Projective rectangles}\label{sec:pr}

We present essential properties of projective rectangles from \cite{pr1}.

An \emph{incidence structure} is a triple $(\cP,\mathcal{L},\mathcal{I})$ of sets with $\mathcal{I}
\subseteq \cP \times \mathcal{L}$. The elements of $\cP$
are \emph{points}, the elements of $\mathcal{L}$ are \emph{lines}.
A point $p$ and a line $l$ are \emph{incident} if $(p,l) \in \mathcal{I}$. A set $P$ of points is said to be
\emph{collinear} if all points in $P$ are in the same line. We say that two
distinct lines \emph{intersect in a point} if they are incident with the same point.  
If the lines are sets of points, then incidence is containment and we may omit $\mathcal{I}$ from the notation.

A \emph{projective rectangle} is an incidence structure $(\cP,\mathcal{L},\mathcal{I})$ that satisfies the following axioms:

\begin{enumerate} [({A}1)]
\item \label{Axiom:A1}   Every two distinct points are incident with exactly one line.
\medskip

\item \label{Axiom:A2} There exist four points with no three of them collinear.
\medskip

\item \label{Axiom:A3}  Every line is incident with at least three distinct points.
\medskip

\item \label{Axiom:A4}  There is a \emph{special point} $D$.
A line incident with $D$ is called \emph{special}.  A line that is not incident with $D$ is called \emph{ordinary}, and a point that is not $D$ is called \emph{ordinary}.
\medskip

\item \label{Axiom:A5}  Each special line intersects every other line in exactly one point.
\medskip

\item \label{Axiom:A6}  If two ordinary lines $l_1$ and $l_2$ intersect in a point, then every two lines that intersect both  $l_1$ and $l_2$ in four distinct points, intersect in a point.
\end{enumerate}

A projective plane is a projective rectangle.  We call it \emph{trivial}.  A projective rectangle in which there are no two disjoint ordinary lines is a projective plane.  Our interest is in the other projective rectangles.

If a projective rectangle $\PR$ has $m+1$ special lines, each with $n+1$ points, then we say that the
\emph{order} of $\PR$ is $(m,n)$.  In this article we always assume $m$ and $n$ are finite.  

We note elementary properties from \cite[Section 3]{pr1}: $n \geq m\geq 2$ and a projective rectangle with $m=n$ is trivial.  Every special line has the same number of points, every ordinary line has $m+1$ points.   There are exactly $n^2$ ordinary lines; the number of ordinary lines incident with an ordinary point is $n$.  There are exactly $(m+1)n$ ordinary points.
For a given point $p$ in an ordinary line $l$, there are $n-1$ ordinary lines intersecting $l$ at $p$.
The point set of $\PR\setminus D$ is partitioned by the special lines deleting $D$.

\begin{exam}\label{ex:L2k}
The matroid $L_2^k$ (see Figure \ref{figure1}) is a projective rectangle with $m+1=3$ special lines and $n = 2^k$ ordinary points on each.  Let $A:= \left\{ a_g \mid g \in \bbZ_2^k \right\}
\cup \{D \}$, $B:= \left\{ b_g \mid g \in \bbZ_2^k \right\} \cup \{D \}$ and
$C:= \left\{ c_g \mid g \in \bbZ_2^k \right\} \cup \{D \}$.  
Let $L_2^k$ be the simple matroid of rank 3 defined on the ground set
$E:= A\cup B\cup C$ by its rank-2 flats; they are $A$, $B$, $C$, which are the special lines,
and the sets $\{a_g, b_{g +h}, c_h \}$ with $g$ and $h$ in $\bbZ_2^k$, which are the ordinary lines.

We say more about this in Examples \ref{TensorProduct4} and \ref{ex:narrow} and Section \ref{sec:srg}.

\begin{figure} [htbp]
\begin{center}
\includegraphics[width=9cm]{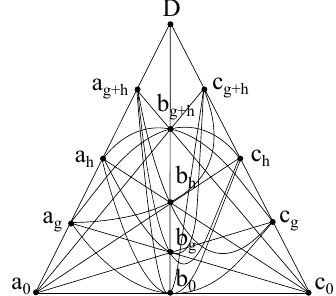}
\caption{The matroid $L_2^2$ with group the Klein 4-group, $\bbZ_2 \times \bbZ_2 \cong \{1,g,h,g+h\}$.  } \label{figure1}
\end{center}
\end{figure}
\end{exam}

A \emph{subplane} of $\PR$ is an incidence substructure that is a projective plane.
A subplane may contain an ordinary line and all its points; such a subplane has order $m$.

\begin{thm}\label{prop:maximalsubplane}
In a projective rectangle, every maximal subplane has order $m$.
\end{thm}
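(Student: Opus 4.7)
The plan is to argue by contradiction: I show every subplane of order $k<m$ can be properly enlarged, so a maximal subplane must have order exactly $m$. Combined with the bound $k\le m$, which holds because a projective plane is not a pencil, forcing $\pi$ to contain some line of $\PR$ not passing through $D$ --- an ordinary line, with $k+1$ points in $\pi$ out of $m+1$ in $\PR$ --- this will prove the theorem.

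Assume $\pi$ has order $k<m$, and pick two intersecting ordinary lines $L,L'$ of $\pi$ meeting at an ordinary point $q_0\in\pi$; such $L,L'$ exist since at most $m+1$ lines of $\pi$ are special and two ordinary lines of $\pi$ must meet at a point of $\pi$ that is necessarily $\ne D$. Set $A=L\setminus\{q_0\}$ and $B=L'\setminus\{q_0\}$, each of size $m$ in $\PR$. By A1 each pair $(a,b)\in A\times B$ yields a unique \emph{transversal} $T_{a,b}=ab$, producing $m^2$ distinct transversals. Applying A6 with $l_1=L,\ l_2=L'$ shows that any two transversals meet (either at a shared point on $L$ or $L'$, or at an interior point forced by A6), while applying A6 with $l_1=L,\ l_2=T$ for a fixed transversal $T$ shows that for every $x\in T\setminus(L\cup L')$ and every $a'\in A\setminus\{T\cap L\}$ the line $xa'$ is itself a transversal. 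These two facts drive the construction: $L$, $L'$, all transversals, and the additional lines of $\PR$ through $q_0$ forced by closure together form a subplane $\pi^+$ of order $m$, whose points are $L\cup L'$ plus the interior intersection points of pairs of transversals. The second fact guarantees every transversal carries $m+1$ points of $\pi^+$, and A6 is the tool by which all remaining pairs of lines of $\pi^+$ are shown to meet in $\pi^+$.

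Every point of $\pi$ lies in $\pi^+$: points on $L\cup L'$ trivially, and for $r\in\pi\setminus(L\cup L')$, picking any $s\in(L\cap\pi)\setminus\{q_0\}$, the line $rs$ of $\pi$ meets $L'$ at some $t\ne q_0$ (else $r\in L$), so $rs$ is a transversal containing $r$. Thus $\pi\subseteq\pi^+$, and since $\pi^+$ has order $m>k$ the inclusion is strict, contradicting maximality. The main obstacle is the detailed verification that $\pi^+$ is genuinely a projective plane of order $m$; all of it reduces to repeated invocations of A6, with the most delicate step being the existence and consistency of the $m-1$ auxiliary lines through $q_0$ completing the pencil at $q_0$.
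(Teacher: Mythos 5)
This theorem is imported into the present paper from \cite{pr1} without proof, so the comparison is with the argument there; your outline follows essentially the same route as that source: bound the order of a subplane above by $m$ via an ordinary line, then show that any subplane of order $k<m$ sits inside the order-$m$ plane spanned by two of its intersecting ordinary lines. The framing steps you give are sound (two intersecting ordinary lines exist in $\pi$; $k\le m$; every point of $\pi$ lies on $L\cup L'$ or on a transversal, so $\pi\subseteq\pi^+$). The difficulty is that the central step --- that $L$, $L'$, the transversals, and a completed pencil at $q_0$ form a projective plane of order $m$ --- is precisely the hard content (in this paper it is Theorem \ref{prop:twolinesintersectingpp} together with the order count in Theorem \ref{thm:counting} Part \eqref{prop:counting:plonpi}), and you assert it rather than prove it.

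Beyond being a deferral, the sketch has a concrete flaw: not every transversal is ordinary. For each $a\in A$, the special line through $D$ and $a$ meets $L'$ in a single point $b_a\ne q_0$ (by (A\ref{Axiom:A5}), since $q_0$ lies on a different special line), so $T_{a,b_a}$ is a special line; there are $m$ such special transversals. Your ``second fact'' is derived by applying (A\ref{Axiom:A6}) with $l_2=T$, which is not permitted when $T$ is special, and a special transversal has $n+1$ points in $\PR$ rather than $m+1$, so the claim that every transversal carries exactly $m+1$ points of $\pi^+$ requires a separate argument identifying which of its points belong to $\pi^+$. Similarly, the existence and consistency of the $m-1$ auxiliary lines through $q_0$, and the verification that any two points of $\pi^+$ lie on a common line \emph{of $\pi^+$} (not merely that any two lines of $\pi^+$ meet somewhere in $\PR$), are exactly the closure arguments that occupy the proof in \cite{pr1}; labelling them ``the main obstacle'' is accurate but leaves the proof incomplete. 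A smaller slip: to produce two ordinary lines of $\pi$ you compare the line count of $\pi$ with $m+1$, but a subplane of order $k<m$ may have only $k^2+k+1<m+1$ lines; the correct observation is that the special lines restrict to a pencil through $D$ in $\pi$, hence contribute at most $k+1$ lines of $\pi$, leaving at least $k^2\ge 4$ ordinary ones.
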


When we refer to a \emph{plane} in a projective rectangle, we mean a maximal subplane.  Also, when we say several lines are \emph{coplanar}, we mean there is a plane $\pi$ such that each of the lines that is ordinary is a line of $\pi$ and for each line $s$ that is special, $s \cap \pi$ is a line of $\pi$.

\begin{prop}\label{prop:Dinpp}
In a projective rectangle $\PR$, the special point $D$ is a point of every plane.  Also, for every special line $s$ and every plane $\pi$, $s\cap\pi$ is a line of $\pi$.
\end{prop}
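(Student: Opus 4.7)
The plan is to prove by contradiction that $D\in\pi$; the second assertion will then be immediate, since $\pi$ has exactly $m+1$ lines through $D$ while $\PR$ has exactly $m+1$ special lines, all passing through $D$.

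A preliminary observation drives the argument: for any special line $s$, $|s\cap\pi|\in\{0,1,m+1\}$. Indeed, any two $\pi$-points of $s$ determine a unique line of $\pi$, which as a $\PR$-line containing those two points must coincide with $s$ by Axiom (A1); hence $s$ is itself a line of $\pi$, carrying exactly $m+1$ $\pi$-points by Theorem~\ref{prop:maximalsubplane}. A parallel observation I would use: an ordinary $\PR$-line that is a line of $\pi$ equals its full $\PR$-line, since both sets contain exactly $m+1$ points.

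Now suppose $D\notin\pi$, so all $m^2+m+1$ $\pi$-points are ordinary and are partitioned by the $m+1$ special lines. Letting $a,b,c$ count the special lines intersecting $\pi$ in $m+1$, $1$, $0$ points respectively, the equations $a+b+c=m+1$ and $(m+1)a+b=m^2+m+1$ force $a=m$, $b=1$, $c=0$. Let $s^{*}$ be the special line with a single $\pi$-point $p^{*}$. Choose any $\pi$-point $q\ne p^{*}$; then $q$ lies on some special line $s_i$ with $|s_i\cap\pi|=m+1$, so $s_i$ is a line of $\pi$ through $q$. The remaining $m$ lines of $\pi$ through $q$ must be ordinary $\PR$-lines (since $q$ is on only one special $\PR$-line), each coinciding with its full $\PR$-line. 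By Axiom (A5), each of these ordinary lines meets $s^{*}$ in a $\PR$-point, which, being in $\pi$, must equal $p^{*}$. Thus $m\ge 2$ distinct lines of $\pi$ through $q$ all pass through $p^{*}$, contradicting Axiom (A1).

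Hence $D\in\pi$, and the second statement follows as noted at the start. The main subtlety is identifying $|s_i\cap\pi|$ as the right quantity to track and then exploiting the rigidity that ordinary lines of $\pi$ coincide with their full $\PR$-lines in order to force all $m$ such lines through $q$ to hit the single point $p^{*}$ on $s^{*}$.
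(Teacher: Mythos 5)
Your argument is correct. The paper itself states this proposition without proof (it is imported from \cite{pr1}), so there is nothing to compare against line by line, but your counting argument is sound and uses only facts available in Section~\ref{sec:pr}: the trichotomy $|s\cap\pi|\in\{0,1,m+1\}$ follows from (A\ref{Axiom:A1}) plus Theorem~\ref{prop:maximalsubplane}, the partition of ordinary points by special lines justifies the equations $a+b+c=m+1$ and $(m+1)a+b=m^2+m+1$, and the forced values $a=m$, $b=1$, $c=0$ set up the clean contradiction via (A\ref{Axiom:A5}): the $m\ge 2$ ordinary lines of $\pi$ through $q$ would all have to pass through the unique point $p^{*}$ of $s^{*}\cap\pi$, violating (A\ref{Axiom:A1}). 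The deduction of the second assertion from $D\in\pi$ by matching the $m+1$ lines of $\pi$ through $D$ bijectively with the $m+1$ special lines is also correct.
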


\begin{thm}[Planes in \PR]\label{prop:twolinesintersectingpp} Let $\PR$ be a projective rectangle. If two ordinary lines in $\PR$ intersect in a point, then both are lines of a unique plane in $\PR$.  

\end{thm}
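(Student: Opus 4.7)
The plan is to establish existence of $\pi$ by explicit construction and uniqueness by a short closure argument. For uniqueness, observe that any plane $\pi$ containing both $l_1$ and $l_2$ must, as a projective plane, also contain the unique line $qr$ (axiom A1) for every $q\in l_1\setminus\{p\}$ and $r\in l_2\setminus\{p\}$; all points on each such line are then forced to lie in $\pi$, and by Proposition \ref{prop:Dinpp} so are $D$ and the trace $s\cap\pi$ for every special line $s$. These conditions are dictated by $l_1$ and $l_2$ alone, so $\pi$ is determined uniquely, and in particular any two planes containing $l_1$ and $l_2$ coincide.

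For existence, define $\pi$ by taking as its ordinary lines $l_1$, $l_2$ together with the $m^2$ lines of the form $qr$ for $q\in l_1\setminus\{p\}$ and $r\in l_2\setminus\{p\}$; take its points to be $\{D\}$ together with every point of $\PR$ incident with one of these lines; and take the trace on this point set of each of the $m+1$ special lines of $\PR$ as a further line of $\pi$. The key step in verifying $\pi$ is a projective plane is that any two of the constructed ordinary lines $qr$ and $q'r'$ meet in a point of $\pi$. If $q=q'$ or $r=r'$, the intersection already lies on $l_1$ or $l_2$; otherwise the four points $q,q',r,r'$ are distinct, and axiom A6 applied to $l_1$ and $l_2$ (which intersect at $p$) with $qr$ and $q'r'$ as the two secants produces the intersection, which by construction lies in $\pi$. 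Intersections of an ordinary line of $\pi$ with a special line come from A5, while two special lines of $\pi$ meet at $D$.

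The main obstacle is completing the dual verification: every two distinct points of $\pi$ lie on a line of $\pi$. The nontrivial case is when one or both points lie off $l_1\cup l_2$. The strategy is iterated use of A6. For instance, given $x\in l_1\setminus\{p\}$ and $y$ on some $qr$ with $x\neq q$ and $y\neq r$, apply A6 to the intersecting pair $l_1, qr$ (meeting at $q$) with secants $l_2$ and the line through $x$ and $y$; since the four intersection points $x,q,y,r$ are distinct, A6 forces $xy$ to meet $l_2$ at some point $r'$, identifying $xy$ with the constructed line $xr'$ of $\pi$. Two points both off $l_1\cup l_2$ are handled similarly by first projecting each through a convenient point of $l_1$ onto $l_2$ and applying A6 twice. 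Once incidence has been established in both directions, elementary counting shows every line of $\pi$ carries $m+1$ points and every point of $\pi$ lies on $m+1$ lines of $\pi$, so $\pi$ is a subplane of order $m$; by Theorem \ref{prop:maximalsubplane} it is automatically maximal, hence a plane of $\PR$.
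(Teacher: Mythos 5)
The paper does not actually prove this theorem here --- it is quoted from \cite{pr1} --- so your proposal has to stand on its own. The overall strategy (close up $l_1\cup l_2$ under joins, use (A\ref{Axiom:A6}) to verify the plane axioms, then invoke Theorem \ref{prop:maximalsubplane} for maximality) is the natural one, but as written there are two genuine gaps.

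First, the constructed line set is wrong. Among the $m^2$ pairs $(q,r)$ with $q\in l_1\setminus\{p\}$, $r\in l_2\setminus\{p\}$, exactly $m$ consist of points lying on a common special line (one for each special line missing $p$), so their join is special, not ordinary; only $m^2-m$ pairs yield ordinary lines. More importantly, a plane of order $m$ containing $l_1$ and $l_2$ has $m$ ordinary lines through $p$, of which your construction supplies only $l_1$ and $l_2$: the other $m-2$ ordinary lines through $p$ are not of the form $qr$ and are absent from your $\pi$. Consequently, for $m\ge 3$, two points of your point set whose join passes through $p$ but is neither $l_1$ nor $l_2$ are not collinear in the structure as defined, so it is not a projective plane. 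These missing lines are not obtainable in one step from $l_1\cup l_2$; the closure is genuinely iterative, which is why this theorem is not a one-paragraph consequence of (A\ref{Axiom:A6}).

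Second, the verification that every two points of $\pi$ are joined by a line of $\pi$ --- the heart of the whole proof --- is only sketched, and the cases you defer are exactly the hard ones. For two points off $l_1\cup l_2$, ``project through a convenient point of $l_1$ and apply (A\ref{Axiom:A6}) twice'' needs the four-distinct-points hypothesis, which fails precisely in the degenerate positions (join passing through $p$, through one of $q,r,q',r'$, or lying in a special line) that require separate treatment; the through-$p$ case cannot be handled at all with your line set. Relatedly, your uniqueness argument only shows that every plane containing $l_1,l_2$ contains a fixed set $S$ of points; to conclude uniqueness you must also know $S$ is itself a subplane of order $m$ (so that equality follows from Theorem \ref{prop:maximalsubplane} and a point count), which returns you to the incomplete existence argument. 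The skeleton is right, but the construction needs to be corrected and the closure argument carried out in full before this counts as a proof.
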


\begin{thm}\label{thm:counting}
Let $\PR$ be a projective rectangle of order $(m,n)$.  
\begin{enumerate}[{\rm(a)}]

\item \label{prop:counting:plonpi} There are $m(m+1)$ ordinary points and $m^2$ ordinary lines in each plane.   

\item \label{prop:counting:pionl} The number of planes that contain each ordinary line is $\dfrac{n-1}{m-1}.$  

\item \label{prop:counting:pi} The number of planes in $\PR$ is $\dfrac{n^2(n-1)}{m^2(m-1)}.$  

\end{enumerate}
\end{thm}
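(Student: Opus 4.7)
The plan is to extract all three counts from the fact (Theorem~\ref{prop:maximalsubplane}) that every plane is a projective plane of order $m$, combined with the elementary properties listed after Example~\ref{ex:L2k} and with Proposition~\ref{prop:Dinpp}.

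For part (a), I would start from the standard count $m^{2}+m+1$ for the points (and for the lines) of a projective plane of order $m$. Proposition~\ref{prop:Dinpp} says $D$ lies in every plane $\pi$ and that $s\cap\pi$ is a line of $\pi$ for each special line $s$. Since there are $m+1$ special lines and exactly $m+1$ lines of $\pi$ through $D$, these intersections exhaust the lines of $\pi$ through $D$. Subtracting $D$ from the point total gives $m(m+1)$ ordinary points of $\pi$, and subtracting the $m+1$ lines of $\pi$ through $D$ from the line total gives $m^{2}$ ordinary lines in $\pi$.

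For part (b), I fix an ordinary line $\ell$ and a point $p\in\ell$; note $p$ is ordinary. Through $p$ there are $n$ ordinary lines of $\PR$, hence $n-1$ besides $\ell$. By Theorem~\ref{prop:twolinesintersectingpp}, each such second line together with $\ell$ determines a \emph{unique} plane containing $\ell$. Conversely, in any plane $\pi$ containing $\ell$, the $m+1$ lines of $\pi$ through $p$ split as one through $D$ (namely $s\cap\pi$, where $s$ is the special line through $p$) and $m$ others; the latter cannot be slices of special lines (those would pass through $D$), so they are full ordinary $\PR$-lines, matching in cardinality because both have $m+1$ points. Hence $m-1$ of them differ from $\ell$, and each plane through $\ell$ is produced from exactly $m-1$ of the $n-1$ choices of second line. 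This yields $(n-1)/(m-1)$ planes through $\ell$.

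For part (c), I double-count incident pairs $(\ell,\pi)$ in which $\ell$ is an ordinary line of $\PR$ and $\pi$ is a plane containing $\ell$. Summing over $\ell$ and using part (b) together with the elementary count of $n^{2}$ ordinary lines gives $n^{2}(n-1)/(m-1)$, while summing over $\pi$ and using part (a) gives $m^{2}$ times the number of planes; equating and solving yields the stated formula. I expect the most delicate step to be the fiber count in (b): one must know both the uniqueness clause of Theorem~\ref{prop:twolinesintersectingpp} and that every non-$D$-line of $\pi$ through $p$ is an honest ordinary $\PR$-line, which is where the matching line-sizes become essential.
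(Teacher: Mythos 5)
Your argument is correct: part (a) follows from Theorem~\ref{prop:maximalsubplane} and Proposition~\ref{prop:Dinpp} exactly as you describe, the fiber count in (b) is handled properly (the key points being the uniqueness in Theorem~\ref{prop:twolinesintersectingpp} and that the $m$ non-$D$ lines of $\pi$ through $p$ are genuine ordinary lines of $\PR$), and the double count in (c) is routine. The paper itself imports this theorem from \cite{pr1} without reproducing a proof, but your reasoning is the standard one and coincides with the counting the authors use informally in Section~\ref{sec:graphlines} when identifying the sizes of plane cliques.
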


\begin{thm}[{\cite[Corollary 4.6]{pr1}}]\label{thm:constraint}
A nontrivial projective rectangle $\PR$ of order $(m,n)$ has $n \geq m^2$.
\end{thm}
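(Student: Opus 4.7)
The plan is to run a Bruck-type double count of ordinary lines of $\PR$ against a single maximal subplane $\pi$, and then conclude from nonnegativity of a residual line count.

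First, I would fix any plane $\pi$ in $\PR$; such a plane exists because any ordinary point lies on $n \geq m \geq 2$ ordinary lines, and any two of them determine a plane by Theorem~\ref{prop:twolinesintersectingpp}. By Theorem~\ref{prop:maximalsubplane} this plane has order $m$, and Theorem~\ref{thm:counting}(a) gives that it contains $m(m+1)$ ordinary points and $m^2$ ordinary lines of $\PR$. The crucial structural step, which I expect to be the only delicate one, is to verify that any ordinary line $\ell$ of $\PR$ not contained in $\pi$ meets $\pi$ in at most one point. Since $\ell$ is ordinary it does not pass through $D$, so $\ell \cap \pi$ consists only of ordinary points; if it contained two such points $p, q$, the unique line of $\PR$ through $p, q$ given by axiom (A1) would coincide both with $\ell$ and with the line of $\pi$ joining $p$ to $q$, forcing $\ell \subseteq \pi$, contradiction.

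Next, I would double-count pairs $(\ell, p)$ in which $\ell$ is an ordinary line of $\PR$ not in $\pi$ and $p \in \ell \cap \pi$. For each ordinary point $p$ of $\pi$ there are $m+1$ lines of $\pi$ through $p$; one of these is the intersection $s_p \cap \pi$ of the special line at $p$ (Proposition~\ref{prop:Dinpp}), and the remaining $m$ are ordinary lines of $\PR$ lying in $\pi$. Since $p$ lies on $n$ ordinary lines of $\PR$ in total, exactly $n - m$ of them lie outside $\pi$. Summing over the $m(m+1)$ ordinary points of $\pi$ and using the previous step (each contributing line yields exactly one pair) gives $m(m+1)(n - m)$ as the number of ordinary lines of $\PR$ outside $\pi$ that meet $\pi$. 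Subtracting from $n^2 - m^2$, the total number of ordinary lines of $\PR$ outside $\pi$, the number that are disjoint from $\pi$ equals
\[
(n^2 - m^2) - m(m+1)(n - m) = (n - m)(n - m^2).
\]

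Finally, I would observe that this residual is a nonnegative integer, so $(n - m)(n - m^2) \geq 0$; since $\PR$ is nontrivial we have $n > m$, whence $n - m^2 \geq 0$, i.e., $n \geq m^2$. Once the one-point-intersection lemma is in hand, the rest is elementary arithmetic, so I do not anticipate any further obstacles.
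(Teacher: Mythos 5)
Your argument is correct. Note that this paper does not actually prove Theorem~\ref{thm:constraint}; it imports it from \cite[Corollary~4.6]{pr1}, so there is no in-paper proof to compare against. Your derivation is a clean, self-contained Bruck-style count using only facts quoted in Section~\ref{sec:pr}: the one-point-intersection lemma is justified correctly (a line of the order-$m$ subplane $\pi$ through two points of an ordinary line $\ell$ is $\ell\cap\pi$, which then has $m+1$ points and hence equals $\ell$), the per-point count $n-m$ of ordinary lines leaving $\pi$ is right since exactly one of the $m+1$ lines of $\pi$ through an ordinary point is the restriction of a special line, and the identity
\[
(n^2-m^2)-m(m+1)(n-m)=(n-m)(n-m^2)\ge 0
\]
together with $n>m$ for nontrivial $\PR$ gives $n\ge m^2$. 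This is almost certainly the same style of argument as in the cited source, where the quantity $(n-m)(n-m^2)$ counts the ordinary lines disjoint from a fixed plane.
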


\sectionpage\section{The graph of lines}\label{sec:graphlines}

A projective rectangle $\PR$ gives rise to a graph, the intersection graph of ordinary lines of $\PR$. We define a graph whose vertices are the ordinary lines and in which 
two lines are adjacent if they intersect; equivalently, by Theorem \ref{prop:twolinesintersectingpp}, they are adjacent if they lie in the same projective plane.  This graph is called the \emph{graph of lines} associated to
$\PR$; it is denoted by $G_\cL(\PR)$. 

The ordinary lines in a plane $\pi$ form a clique in $G_\cL(\PR)$, which we call a \emph{plane clique}.  
A special line in $\PR$ may not be a line in $\pi$; however, its restriction to $\pi$ is.  Therefore, $m+1$ lines in $\pi$ are restrictions of special lines and the other lines in $\pi$ are ordinary lines.  By Theorem \ref{thm:counting} Part \eqref{prop:counting:plonpi} there are $m^2$ ordinary lines in $\pi$.  Thus, a plane clique has order $m^2$.  There is another kind of clique that consists of all the ordinary lines on any one ordinary point, which we call a \emph{point clique}.  This clique has order $n$.  No point clique is a plane clique, but the two have the same order if $n=m^2$, which is the case for the smallest nontrivial projective planes (by Theorem \ref{thm:constraint}).

An $r$-regular graph $G := (V,E)$ with $\nu$ vertices is \emph{strongly regular}, if there are integers $\lambda$  and $\mu$ such that every two adjacent vertices have $\lambda$ common neighbors and every two non-adjacent vertices have $\mu$ common neighbors. Those parameters are denoted by $(\nu, r, \lambda,  \mu).$   In Theorem \ref{prop:graph} we prove that $G_\cL(\PR)$ is strongly regular.

\begin{thm}\label{prop:graph} Let $G_\cL(\PR)$ be the graph of lines of a projective rectangle $\PR$ of order $(m,n)$. Then:
 \begin{enumerate}[{\rm(a)}]
 
 \item \label{orderG}  $G_\cL(\PR)$ has $\nu = n^2$ vertices.

 \item \label{regularG}  $G_\cL(\PR)$ is $r = (m+1)(n-1)$-regular.
 
 \item \label{stronglyregularG}  $G_\cL(\PR)$ is strongly regular with parameters $\lambda = m(m-1)+n-2 = n+(m+1)(m-2)$ and $\mu = m(m+1)$.  
 
 \item \label{eigenvaluesG}  The eigenvalues of $G_\cL(\PR)$ are $\tau_0=(m+1)(n-1)$ with multiplicity $1$, $\tau_1=n-m-1$ with multiplicity $(m+1)(n-1)$, and $\tau_2=-(m+1)$ with multiplicity $(n-m)(n-1)$.

 \item \label{diameterG} The diameter of $G_\cL(\PR)$ is $2$ if\/ $\PR$ is nontrivial.  $G_\cL(\PR)$ is a complete graph if\/ $\PR$ is trivial.
 
 \item \label{cliqueinG}  Suppose $\PR$ is nontrivial.  The maximal cliques in $G_\cL(\PR)$ are the plane cliques, of size $m^2$, and the point cliques, of size $n$, all of which are distinct sets.  There are $n^2(n-1)/m^2(m-1)$ plane cliques and $(m+1)n$ point cliques.  The maximum clique size is $n$.

Every vertex of $G_\cL(\PR)$ is in exactly $(n-1)/(m-1)$ plane cliques and exactly $m+1$ point cliques.
Moreover, each two adjacent vertices are together in exactly one plane clique and exactly one point clique.

Two plane cliques, and also two point cliques, intersect in at most one vertex.  A plane clique and a point clique are disjoint or intersect in exactly $m$ vertices.

 \item \label{connetedG}  The connectivity of $G_\cL(\PR)$ is $(m+1)(n-1)$.
 \end{enumerate}
\end{thm}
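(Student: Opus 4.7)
The plan is to prove the seven parts in sequence, with a recurring coplanarity lemma as the workhorse: if an ordinary line $l$ of $\PR$ has two distinct points in a plane $\pi$, then $l$ is a line of $\pi$. This holds because $\pi$, being a subplane, already contains a line through those two points, which must coincide with $l$ by Axiom A1. Parts \eqref{orderG} and \eqref{regularG} are immediate enumerations from Section~\ref{sec:pr}: there are $n^2$ ordinary lines, and any ordinary line has $m+1$ points each lying on $n-1$ other ordinary lines, no two of which share a second point with $l$ (by Axiom A1).

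For part \eqref{stronglyregularG}, I fix adjacent ordinary lines $l_1,l_2$ meeting at $p$, working inside the unique plane $\pi$ given by Theorem~\ref{prop:twolinesintersectingpp}. Splitting common neighbors by whether they pass through $p$: the $n-2$ other ordinary lines through $p$ are all common neighbors, and any $l_3$ meeting $l_1,l_2$ outside $p$ has two points in $\pi$ and hence lies in $\pi$ by the coplanarity lemma, contributing the $m^2-m$ ordinary lines of $\pi$ avoiding $p$; summing gives $\lambda = m(m-1)+n-2$. For $\mu$, take disjoint $l_1,l_2$: each of the $(m+1)^2$ pairs $(p_1,p_2)\in l_1\times l_2$ determines a unique line of $\PR$ by Axiom A1, and exactly $m+1$ of these are special---one special line through each point of $l_1$, each meeting $l_2$ in a single point by Axiom A5, with distinct $p_1$ giving distinct special lines through $D$. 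The remaining $m(m+1)$ pairs biject with ordinary common neighbors. Parts \eqref{eigenvaluesG} and \eqref{diameterG} are then routine: the eigenvalues come from the standard formulas for strongly regular graphs applied to \eqref{regularG} and \eqref{stronglyregularG}; $\mu>0$ forces diameter~$2$ in the nontrivial case, while $m=n$ makes $r=n^2-1$, so $G_\cL(\PR)$ is complete.

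The structural heart is part \eqref{cliqueinG}. To classify a maximal clique $K$, I split on whether $K$ contains three concurrent lines. If $l_1,l_2,l_3\in K$ share a point $p$ and some $l_4\in K$ avoids $p$, the coplanarity lemma forces $l_4$---and then, by repetition, every line of $K$---into the unique plane of $l_1,l_2$, so $K$ is a plane clique; if every line of $K$ passes through $p$, then $K$ is a point clique. If no three lines of $K$ are concurrent, fix $l_1,l_2\in K$ meeting at $p$; each further line of $K$ meets $l_1,l_2$ at points other than $p$, so lies in the plane of $l_1,l_2$, and again $K$ is a plane clique. The sizes $m^2$ and $n$ follow from Theorem~\ref{thm:counting}\eqref{prop:counting:plonpi} and the basic count of ordinary lines through an ordinary point; Theorem~\ref{thm:constraint} gives $n\geq m^2$, so the maximum is $n$. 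The counts of cliques through vertices and edges are direct from Section~\ref{sec:pr} and Theorem~\ref{prop:twolinesintersectingpp}, and the intersection claims reduce to incidence bookkeeping: two planes share at most one ordinary line by Theorem~\ref{prop:twolinesintersectingpp}, two point cliques share at most one line by Axiom A1, and $|C_\pi\cap C_p|$ is $0$ when $p\notin\pi$ (otherwise an ordinary line of $\pi$ through $p$ would drag $p$ into $\pi$) and $m$ otherwise, since exactly $m$ of the $m+1$ lines of $\pi$ through $p$ are ordinary. Part \eqref{connetedG} follows from the Brouwer--Mesner theorem that a connected strongly regular graph has vertex connectivity equal to its valency.

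The main obstacle is managing the case analyses in \eqref{stronglyregularG} and \eqref{cliqueinG}: every count hinges on repeated use of the coplanarity lemma, and one must verify carefully that invoked points are distinct and that the lines produced are genuinely ordinary (rather than special lines in disguise or copies of $l_1$ or $l_2$). Once the coplanarity lemma is cleanly established, the remainder is bookkeeping against the data compiled in Section~\ref{sec:pr}.
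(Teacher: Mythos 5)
Your proposal is correct and follows essentially the same route as the paper: the same $(m+1)^2-(m+1)$ count for $\mu$, the same classification of maximal cliques by forcing lines into the unique plane of two intersecting lines (your explicit ``coplanarity lemma'' is used implicitly there), and the same appeals to the standard eigenvalue formulas and the Brouwer--Mesner connectivity theorem. The only cosmetic difference is that you compute $\lambda$ by directly counting the $n-2$ further lines through $p$ plus the $m^2-m$ ordinary lines of $\pi$ avoiding $p$, whereas the paper gets the same value by inclusion--exclusion on the plane clique and point clique containing the edge.
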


\begin{proof}
Parts \eqref{orderG} and \eqref{regularG} are immediate from observations in Section \ref{sec:pr}.

Let $l$ be an arbitrary ordinary line in $\PR$ and let $p$ be a point in $l$.  Exactly $n-1$ ordinary lines other than $l$ contain the point $p$.  The same number of ordinary lines, other than $l$, contain each point in $l$.  The only line that contains more than one point of $l$ is $l$, so the number of ordinary lines that intersect $l$ is $(m+1)(n-1)$.
Thus, the degree of the vertex $l$ of $G_\cL(\PR)$ is $(m+1)(n-1)$.  This is true for every vertex of $G_\cL(\PR)$, so $G_\cL(\PR)$ is $(m+1)(n-1)$-regular.

We prove Part \eqref{diameterG}.  If\/ $\PR$ is trivial, every ordinary line is adjacent to every other and $G_\cL(\PR)$ is complete.  If\/ $\PR$ is nontrivial then there exist nonintersecting ordinary lines so $G_\cL(\PR)$ is incomplete.  By Part \eqref{regularG} a nontrivial projective rectangle is a strongly regular graph with $\lambda>0$; therefore its diameter is at most 2, and because $G_\cL(\PR)$ is incomplete its diameter is exactly 2.

We prove Part \eqref{cliqueinG}. 
An ordinary line that is adjacent to every ordinary line of a plane $\pi$ must contain more than one point of $\pi$ and therefore is a line of $\pi$.  Therefore, a plane clique is a maximal clique in $G_\cL(\PR)$.

An ordinary line $l$ that is adjacent to every ordinary line on a point $p$ but does not contain $p$ must intersect all the $n$ ordinary lines on $p$ in distinct points.  Therefore, $m=n$ so $\PR$ is a projective plane, which is a trivial projective rectangle.  Hence, for a nontrivial projective rectangle, a point clique is a maximal clique of $G_\cL(\PR)$.

Consider a maximal clique $K$ that is not a point clique.  Let $l_1,l_2 \in K$; then $l_1 \cap l_2$ is a point $p$ and there is a plane $\pi \supset l_1,l_2$.  Since $K$ is not a point clique, there is an ordinary line $l \in K$ adjacent to $l_1$ and $l_2$ that does not contain $p$, which means $l$ intersects $l_1$ and $l_2$ at distinct points $p_1$ and $p_2$, which are points of $\pi$.  Since $l$ is ordinary, $p_1$ and $p_2$ are not contained in a special line; therefore there is a unique ordinary line on $p_1$ and $p_2$ in $\pi$, which must be the unique line $l$ that contains both those points.  That is, any ordinary line in $K$ that is not on $p$ is a line of $\pi$.  Now suppose $l'$ is an ordinary line in $K$ that is not a line of $\pi$.  If $p \notin l'$, then $l'$, like $l$, is a line of $\pi$.  Therefore, $p \in l'$.  Since $l$ and $l'$ are in a clique, they must intersect; let $q$ be their intersection point.  As $p \notin l$, $q \neq p$.  As $l$ is in $\pi$, $q$ is in $\pi$, as is $p$.  Thus, $l'$ is an ordinary line that contains two points of $\pi$, which implies it is a line of $\pi$.  It follows that every maximal clique that is not a point clique must be a plane clique.

The number of plane cliques in $G_\cL(\PR)$ is the number of planes in $\PR$, which is given by Theorem \ref{thm:counting} Part \eqref{prop:counting:pi}.  The number of point cliques in $G_\cL(\PR)$ is the number of ordinary points in $\PR$.

By Theorem \ref{thm:counting} Part \eqref{prop:counting:plonpi}, the sizes of point and plane cliques are $n$ and $m^2$, respectively.  By Theorem \ref{thm:constraint}, since $\PR$ is nontrivial, $n \geq m^2$.  Therefore, the clique number of $G_\cL(\PR)$ is $n$.

The number of plane cliques that contain a vertex $l$ of $G_\cL(\PR)$ is the number of planes in $\PR$ that contain the line $l$, which is given by Theorem \ref{thm:counting} Part \eqref{prop:counting:pionl}.  The number of point cliques that contain the vertex $l$ is the number of points in $l$.

Two adjacent vertices are ordinary lines $l$ and $l'$ that intersect at a point $p$.  The only point clique of $G_\cL(\PR)$ that contains both vertices is the one that consists of all ordinary lines on $p$.  
By Theorem \ref{prop:twolinesintersectingpp} there is only one plane that contains both lines, so only one plane clique contains both vertices $l$ and $l'$ of $G_\cL(\PR)$.

The intersection of two point cliques defined by points $p$ and $p'$ in $\PR$ is empty if $p$ and $p'$ belong to the same special line.  Otherwise, there is a unique ordinary line $l$ that contains both points, so the vertex $l$ is the only one in the intersection of the two point cliques.

Suppose the intersection of two plane cliques contains vertices $l$ and $l'$.  Then two planes contain both the ordinary lines $l$ and $l'$, which intersect at a point because they are coplanar, but this contradicts Theorem \ref{prop:twolinesintersectingpp}.

We prove Part \eqref{stronglyregularG}.  We may assume $\PR$ is nontrivial.  

First we evaluate $\mu$.  Let $l$ and $l'$ be nonadjacent vertices of $G_\cL(\PR)$; that is, they are nonintersecting ordinary lines in $\PR$.  The number of common neighbors equals the number of ordinary lines that intersect both $l$ and $l'$, which is $(m+1)^2$, the number of lines spanned by one point in $l$ and one in $l'$, less $m+1$, the number of them that are special.

Now we evaluate $\lambda$.  Two adjacent vertices $l$ and $l'$ together belong to exactly one plane clique, say $K_\pi$, which contains $m^2-2$ vertices other than $l$ and $l'$, and one point clique defined by the point $p=l\cap l'$, say $K_p$, which contains $n-2$ vertices other than $l$ and $l'$.  Because every common neighbor of $l$ and $l'$ is in a maximal clique that contains both, and plane and point cliques are the only maximal cliques, the vertices in these cliques are the only ones adjacent to both $l$ and $l'$.  By Part \eqref{cliqueinG} $K_\pi \cap K_p$ consists of exactly $m$ vertices, of which $m-2$ are different from $l$ and $l'$. Thus, the number of common neighbors of $l$ and $l'$ equals $[m^2 - 2] + (n-2) - (m-2) = n + m(m-1) - 2$.  This is the value of $\lambda$.

For Part \eqref{eigenvaluesG}, the eigenvalues and multiplicities follow by standard formulas \cite{gr}.

Part \eqref{connetedG} is from the theorem of \cite{b-m} that the connectivity of a strongly regular graph is $r$.

That completes the proof.
\end{proof}

As insurance, we tested the parameters of $G_\cL(\PR)$ against the Krein bounds \cite[Theorem 10.7.1]{gr} for a nontrivial projective rectangle and verified them with the aid of Theorem \ref{thm:constraint}.

\begin{exam}\label{ex:cliqual}
Sometimes a point clique, consisting of several ordinary lines belonging to distinct planes, is a clique of order
$m^2$, the same as a plane clique.  That occurs when $n = m^2$, the least possible for a nontrivial projective rectangle (by Theorem \ref{thm:constraint}).  For instance the set of vertices $\{l_{1}, l_{4},l_{11},l_{14}\}$ of the graph in Figure \ref{figureGPR} gives rise to $K_{4}$.
However, in Figure \ref{figure1} we can see that there is not a plane containing any two of the lines $l_{1}$, $l_{4}$, $l_{11}$, and $l_{14}$.
\end{exam}

\begin{exam}\label{TensorProduct4}  
The graph  $G_\cL(\PR) $ depicted in Figure \ref{figureGPR} is the graph from the projective rectangle $L_2^2$ in Figure \ref{figure1}.
The vertices are the lines
\begin{center}
\begin{tabular}{lll}
$l_{0}=\{A_{1},B_{1},C_{1} \}$, 	& $l_{1}=\{A_{1},B_{g},C_{g} \}$, 	& $l_{2}=\{A_{1},B_{h},C_{h} \}$,\\
$l_{3}=\{A_{1},B_{gh},C_{gh} \}$,  	& $l_{4}=\{A_{g},B_{g},C_{1} \}$, 	&$l_{5}=\{A_{g},B_{1},C_{g} \}$,\\
$l_{6}=\{A_{g},B_{gh},C_{h} \}$, 	&$l_{7}=\{A_{g},B_{h},C_{gh} \}$, 	& $l_{8}=\{A_{h},B_{h},C_{1} \}$, \\
$l_{9}=\{A_{h},B_{gh},C_{g} \}$, 	&$l_{10}=\{A_{h},B_{1},C_{h} \}$, 	&$l_{11}=\{A_{h},B_{g},C_{gh} \}$,\\
$l_{12}=\{A_{gh},B_{gh},C_{1} \}$, 	&$l_{13}=\{A_{gh},B_{h},C_{g} \}$, 	&$l_{14}=\{A_{gh},B_{g},C_{h} \}$,\\
$l_{15}=\{A_{gh},B_{1},C_{gh} \}$.	&&
\end{tabular}
\end{center}
The graph $G_\cL(\PR) $ is an $(m+1)(n-1)=9$-regular graph with $n^2=16$ vertices. It is strongly regular with $\lambda=4$ and $\mu=6$. 
The graph  $\overline{G}_{\PR} $ depicted in Figure \ref{figureGPR} is the complement of $G_\cL(\PR) $. The
parameters for the complement are $(\nu,\nu-k-1,\nu-2-2k+\mu,\nu-2k+\lambda)$. So,  the complement $\overline{G}_{\PR}$ has parameters $(16, 6, 2, 2)$. It is not bipartite. 
The automorphism group of $G_\cL(\PR) $ has order 1152 and a single orbit. The graph has the Hamilton cycle $l_{4},l_{5},l_{6},l_{7},l_{8},l_{9},l_{10}, l_{11},l_{3},l_{2},l_{1},l_{0},l_{15},l_{14},l_{13}, l_{12},l_{4}$.

\begin{figure} [!ht]
\begin{center}
\includegraphics[width=75mm]{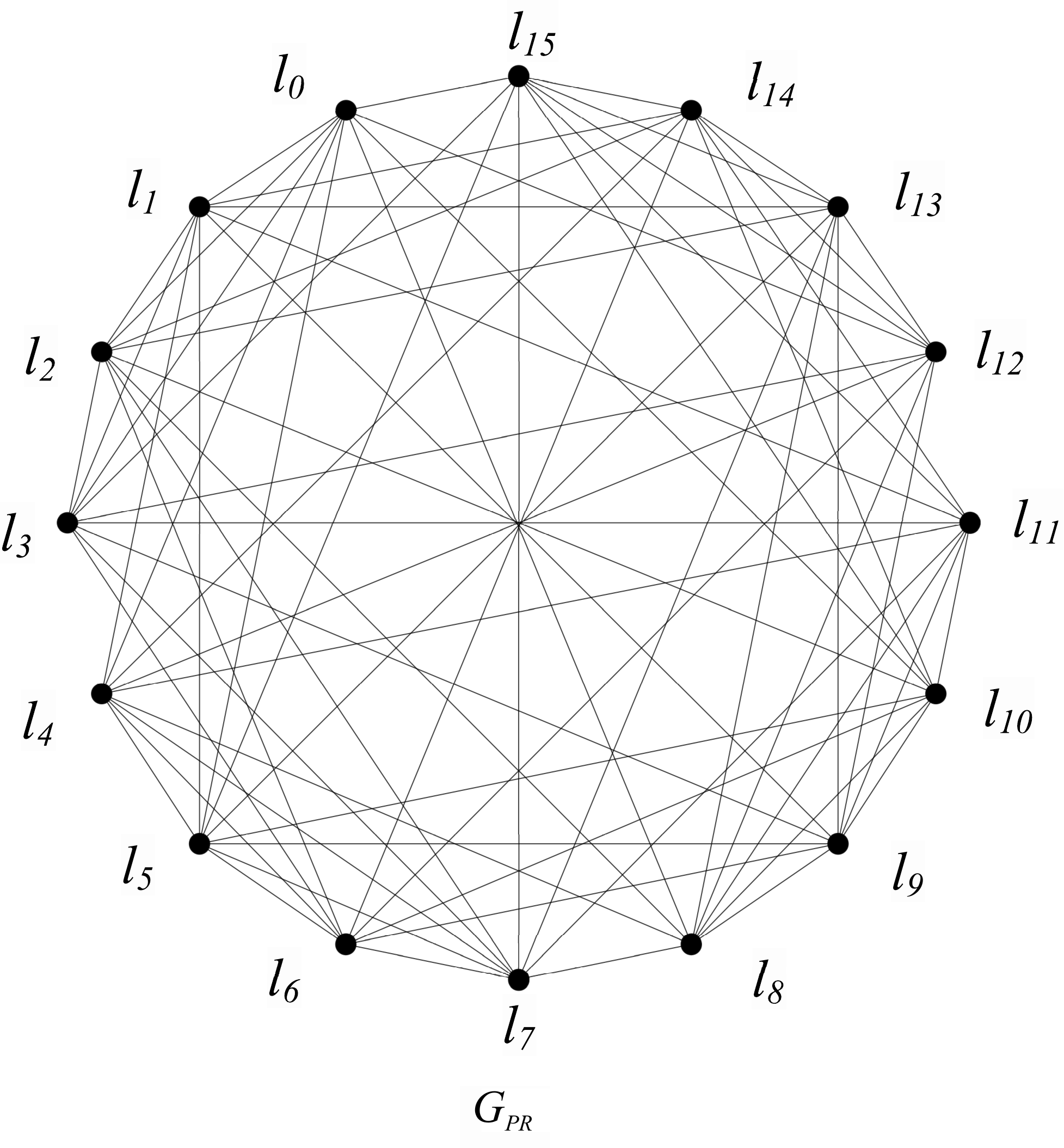} \hspace{.5cm}
\includegraphics[width=75mm] {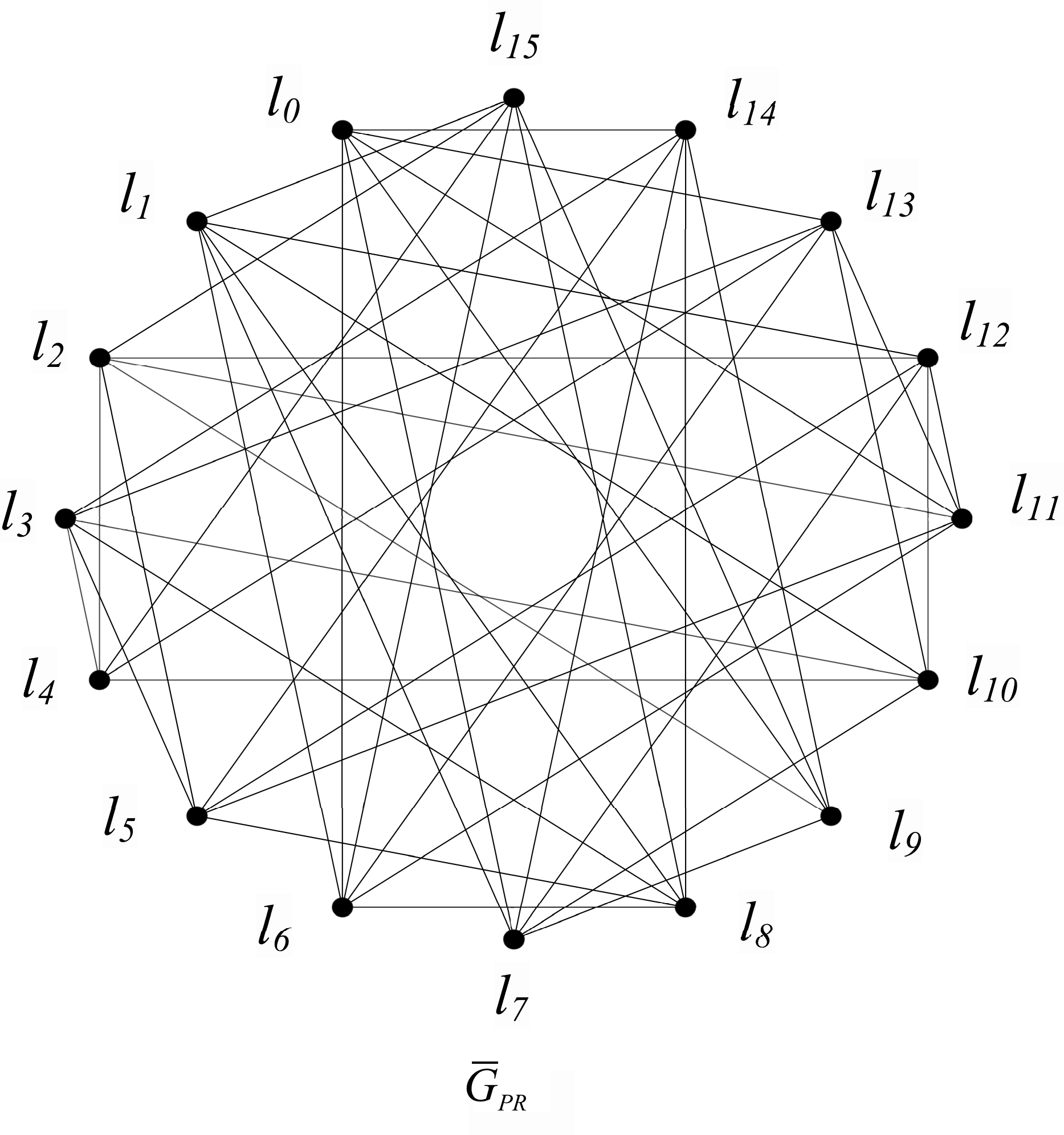}
\caption{The graph of lines $G_\cL(L_2^2)$ and its complement.} \label{figureGPR}
\end{center}
\end{figure}

The graph $G_\cL(\PR)$ is the tensor product $K_{4}\times K_{4}$.
For the proof, let $G_1=\{l_{0},l_{1},l_{4},l_{5} \}$, $G_2=\{l_{2},l_{3},l_{6},l_{7} \}$, $G_3=\{l_{8},l_{9},l_{12},l_{13} \}$, and $G_4=\{l_{10},l_{11},l_{14},l_{15} \}$ observe that the elements of $G_i$ form a complete graph for $i=1,2,3,4$. If $v \in G_i$, then there are exactly two vertices in $u,w \in G_j$ adjacent to $v$ and if $x$ is another vertex of $G_j$, then $x$ is adjacent to exactly one of these two vertices. This completes the proof.
\end{exam}

\sectionpage
\section {The strongly regular graph and a construction of projective rectangles}\label{sec:srg}

The graph of lines, $G_\cL(\PR)$, is strongly regular, but which strongly regular graph is it?  We can identify it as a known type in some examples.  (We use field notation in this section, in particular $\bbF_2$ instead of $\bbZ_2$.)

\begin{exam}[Narrow Projective Rectangles]\label{ex:narrow}
The narrow projective rectangles $\PR = L_2^k$ from Example \ref{ex:L2k} are the projective rectangles with $m=2$ \cite[Section {sec:narrow}]{pr1}.  
According to Theorem \ref{prop:graph} the strongly regular graph $G_\cL(L_2^k)$ has parameters $(\nu,r,\lambda,\mu) = (4^k,3(2^k-1),2^k,6)$.  In small cases the parameters are:
\begin{center}
\begin{tabular}{r|rrrr}
	&$\nu$	&$r$	&\ \ $\lambda$ &\ $\mu$	 \\
\hline
$k$	&$4^k$	&$3(2^k-1)$ &$2^k$ &6	\\\hline
2	&16	&9	&4	&6	\\
3	&64	&21	&8	&6	\\
4	&256	&45	&16	&6	\\
5	&1024	&93	&32	&6	\\
6	&4096	&189	&64	&6	\\
\end{tabular}
\end{center}
\end{exam}

These parameters agree with those of the strongly regular bilinear forms graphs $H_q(2,k)$ from \cite[Section 9.5A]{bcn} (or see \cite[Section 3.4.1]{bvm}).  This fact led us to a characterization of the graph of lines of a narrow rectangle.  The graph $H_q(2,k)$ has as vertices the $2\times k$ matrices over $\bbF_q$; two matrices are adjacent when their difference has rank 1.\footnote{We thank Andries Brouwer for advice on the bilinear forms graphs and for his tables of parameters \cite{btables}, where $H_q(2,k)$ is named Bilin$_{2\times k}(q)$.}

\begin{thm}\label{srg2}
The graph of lines of $L_2^k$ is isomorphic to the bilinear forms graph $H_2(2,k)$.
\end{thm}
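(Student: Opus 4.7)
Here is my plan.

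The ordinary lines of $L_2^k$ are parameterized by pairs $(g,h)\in\bbF_2^k\times\bbF_2^k$ via $l_{(g,h)}:=\{a_g,b_{g+h},c_h\}$. Since both $V(G_\cL(L_2^k))$ and $V(H_2(2,k))$ have cardinality $4^k$, the plan is to define the obvious bijection
\[
\phi:V(G_\cL(L_2^k))\longrightarrow V(H_2(2,k)),\qquad l_{(g,h)}\longmapsto M_{(g,h)}:=\begin{pmatrix}g\\ h\end{pmatrix},
\]
the $2\times k$ matrix over $\bbF_2$ with rows $g$ and $h$, and then show $\phi$ preserves adjacency.

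First I would unpack adjacency in $G_\cL(L_2^k)$ by looking at how two distinct ordinary lines $l_{(g_1,h_1)}$ and $l_{(g_2,h_2)}$ share a point. Because the three special lines partition the ordinary points, the sharing must be of type $a$, $b$, or $c$, which translates to exactly one of the three equations $g_1=g_2$, $g_1+h_1=g_2+h_2$, or $h_1=h_2$ holding. (Any two of these together force $(g_1,h_1)=(g_2,h_2)$, so at most one holds when the lines are distinct, consistent with Axiom A1.)

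Next I would analyze adjacency in $H_2(2,k)$. Two matrices $M_{(g_1,h_1)}$ and $M_{(g_2,h_2)}$ are adjacent iff $M_{(g_1,h_1)}-M_{(g_2,h_2)}$, whose rows are $g_1+g_2$ and $h_1+h_2$, has rank $1$. Over $\bbF_2$ the only nonzero scalar is $1$, so two rows are linearly dependent iff one row is zero or the two rows are equal. Rank exactly $1$ therefore splits into three disjoint subcases: $g_1+g_2=0\neq h_1+h_2$, or $h_1+h_2=0\neq g_1+g_2$, or $g_1+g_2=h_1+h_2\neq0$. Rewriting the last condition as $g_1+h_1=g_2+h_2$ (together with the nondegeneracy that the lines are distinct), I get precisely the three point-sharing conditions from the previous paragraph.

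Matching the cases pairwise—$g_1=g_2$ with the shared $a$-point, $h_1=h_2$ with the shared $c$-point, and $g_1+h_1=g_2+h_2$ with the shared $b$-point—shows $\phi$ is an isomorphism. There is no genuine obstacle here; the only step requiring care is the algebraic translation in the last case, and that works only because we are in characteristic $2$ (in particular, $\phi$ does not give an isomorphism to $H_q(2,k)$ for $q>2$, which is consistent with the coefficient field of $L_2^k$ being $\bbF_2$).
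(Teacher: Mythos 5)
Your proposal is correct and is essentially the paper's proof: both assign to each ordinary line the $2\times k$ matrix over $\bbF_2$ whose rows are the line's two defining parameters, and both match the three types of point-sharing (one per special line) with the three ways a nonzero $2\times k$ matrix over $\bbF_2$ can have rank $1$. The only difference is presentational --- you work directly from the matroid labels $\{a_g, b_{g+h}, c_h\}$, while the paper first coordinatizes $L_2^k$ inside $\PP(\bbF_{2^k})$ as lines $\langle a,b,1\rangle$ (a setup it reuses for the general case $H_q(2,k)$); under the dictionary $g\leftrightarrow a$, $h\leftrightarrow b$ the two bijections coincide.
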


\begin{proof}
The proof requires us to use the natural coordinates of $L_2^k$ (Example \ref{ex:L2k}), which we describe in the following example.

\begin{exam}[Coordinates]\label{ex:L2kcoordinates}
Since $L_2^k$ is the complete lift matroid $\bL_0(\bbF_2^k {\cdot} K_3)$, it comes provided with a coordinate system in the vector space $\bbF_2^k$, which we treat as the additive group $\bbF_{2^k}^+$ of the field $\bbF_{2^k}$.  That lets us treat $L_2^k$ as a subset of the projective plane $\PP(\bbF_{2^k})$.  We use homogeneous coordinates $[x,y,z]$ for points.  A line has an equation $ax+by+cz=0$; we denote the line by a homogeneous triple $\langle a,b,c \rangle$.

The special point is $D: [0,0,1]$.  

The special lines are $\langle a,b,0 \rangle$, where $a,b \in \bbF_2$ (but not both 0); that is, they are $s_0: \langle 1,0,0 \rangle$, $s_\infty: \langle 1,1,0 \rangle$, and $s_1: \langle 0,1,0 \rangle$.  

The ordinary points have coordinates $[x,y,z]$ with $x,y,z \in \bbF_{2^k}$ but $x,y$ not both 0; but they are more restricted because they are in $s_\infty \cup s_0 \cup s_\infty$.  A point in $s_0$ has $a=0$; a point in $s_1$ has $b=0$; a point in $s_\infty$ has $a=b$.

The ordinary lines $l$ are $\langle a,b,1 \rangle$ where $a,b \in \bbF_{2^k}$.  The third line coordinate is nonzero because $D \notin l$; we can standardize the line coordinates to end with 1 because they are homogeneous.

Two ordinary lines $l_1: \langle a_1,b_1,1 \rangle$ and $l_2: \langle a_2,b_2,1 \rangle$ are adjacent if and only if they have a point in common.  They have a common point in $s_0$ if and only if $b_1=b_2$, in $s_1$ if and only if $a_1=a_2$, and in $s_\infty$ if and only if $(a_2-a_1) + (b_2-b_1) = 0$; otherwise they have no common point (in $L_2^k$).  The proof is that the common point is $$[x,y,z] = \big[-(b_2-b_1), a_2-a_1, \begin{vmatrix}a_1&a_2\\b_1&b_2\end{vmatrix}\big].$$ 
Thus, in the graph of lines there are three kinds of adjacency: $s_i$-adjacency for $i = 0,1,\infty$ depending on the location of the common point.  (This gives a factorization of the graph into three $(n-1)$-regular subgraphs.)
\end{exam}

We proceed with the proof.  For each ordinary line $l = \langle a,b,1 \rangle$ we construct a $2\times k$ matrix $M(l)$ over $\bbF_2$.  We assume a fixed $\bbF_2$-basis for the vector space $\bbF_{2^k}^+$ and denote by $B(x)$ the coordinate vector (a row vector) of $x \in \bbF_{2^k}$.  Then $M(l) := \begin{pmatrix} B(a) \\ B(b) \end{pmatrix}.$

The essential question is about the difference $M(l_2)-M(l_1)$.  We compute it:
$$
M(l_2)-M(l_1) = \begin{pmatrix} B(a_2)-B(a_1) \\ B(b_2)-B(b_1) \end{pmatrix} 
= \begin{pmatrix} B(a_2-a_1) \\ B(b_2-b_1) \end{pmatrix} .
$$
This matrix has rank 1 if and only if either a row is zero, or one row is a nonzero scalar multiple of the other, which over $\bbF_2$ means they are equal.  One row is zero if and only if $l_1 \cap l_2$ is a point in $s_0$ or $s_1$.  The rows are equal if and only if $a_2-a_1 = b_2-b_1,$ which can be rewritten as $(a_2-a_1) + (b_2-b_1) = 0$, the condition for a common point in $s_\infty$.  Thus, two ordinary lines are adjacent if and only if their matrices are adjacent in $H_2(2,k)$, which is the theorem.
\end{proof}

More general examples show the same numerical agreement.  The rectangles here are not assumed to satisfy Axiom (A\ref{Axiom:A6}); hence we call them pseudo-projective.  The treatment is like that of $L_2^k$ but is based on projective coordinates rather than the complete lift matroid structure.  We begin with the definition.

\begin{construction}[Subplane Construction {\cite[Section 6]{pr1}}]\label{ex:subplanec}
Theorem \ref{srg2} generalizes to a \emph{subplane construction} that sometimes is a projective rectangle.  Let $q$ be a prime power and $k>1$.  The projective plane $\Pi=\PP(q^k)$ contains a subplane $\pi=\PP(q)$.  Pick a point $D \in \pi$ and let $\cS$ be the set of all lines of $\Pi$ of the form $\overline{Dp}$ for $p \in \pi$; let $\cP = \bigcup \cS$.  Finally, let $\cO$ be the set of all restrictions $l=L\cap\cP$ to $\cP$ of lines $L$ of $\Pi$ that do not contain $D$.  We use the terminiology of projective rectangles; the lines in $\cS$ are special and those in $\cO$ are ordinary.  We shall call the structure $R(q,q^k)=(\cP,\cS\cup\cO)$ a \emph{pseudo-projective rectangle}.  It satisfies all the axioms of a projective rectangle except (possibly) (A\ref{Axiom:A6}).  

When $q$ is a prime number $R(q,q^k)$ does satisfy (A\ref{Axiom:A6}) so it is a projective rectangle; that is a special case of \cite[Theorem 6.3]{pr1}.  
In Theorem \ref{thm:pr} we shall give a different proof that every pseudo-projective rectangle satisfies (A\ref{Axiom:A6}).
\end{construction}

Define two ordinary lines to be adjacent if they have a point in common; this defines the \emph{graph of lines} $G_\cL(R(q,q^k))$.  The numerical results in Theorem \ref{prop:graph} that do not depend on the existence of planes in $R(q,q^k)$ remain valid; that is everything except what involves plane cliques.  
We note that by \cite[Theorem 6.7]{pr1} the existence of a plane clique on every pair of adjacent lines is equivalent to $R(q,q^k)$ being a projective rectangle.

\begin{exam}[Subplane Rectangles]\label{ex:subplaner}
In \cite[Section 6]{pr1} we constructed a pseudo-projective rectangle in $\PP(q^k)$ with parameters $(q,q^k)$ where $q$ is a prime power and $k>1$.  Its graph of lines has strongly regular parameters $(\nu,r,\lambda,\mu) = (q^{2k}, (q+1)(q^k-1), q^k+(q+1)(q-2), q(q+1))$.  When $q=2$ these are the rectangle and graph in the previous example.  For larger values of $q$ we have the data in Table \ref{tb:subplanesrg}.  These parameters agree with those for $H_q(2,k)$.

\begin{center}
\begin{table}[h]
\begin{tabular}{r|rrrr}
	&$\nu$	&$r$	&$\lambda$ &$\mu$	\\
\hline
$k$	&$9^k$	&$4(3^k-1)$ &$3^k+4$ &12	\\\hline
2	&81	&32	&13	&12	\\
3	&729	&104	&31	&12	\\
4	&6561	&320	&85	&12	\\
\end{tabular}
\qquad
\begin{tabular}{r|rrrr}
	&$\nu$	&$r$	&$\lambda$ &$\mu$	\\
\hline
$k$	&$16^k$	&$5(4^k-1)$ &$4^k+10$ &20	\\\hline
2	&256	&75	&26	&20	\\
3	&4096	&315	&74	&20	\\
4	&65636	&1275	&266	&20	\\
\end{tabular}
\bigskip
\caption{The parameters of $G_\cL(\PR)$ for $q=3$ and $4$.}
\label{tb:subplanesrg}
\end{table}
\end{center}
\end{exam}

We use homogeneous coordinates $[x,y,z]$ for points in $\PP(q^k)$.  A line has an equation $ax+by+cz=0$ with $a,b,c \in \PP(q^k)$, not all 0; we denote the line by a homogeneous triple $\langle a,b,c \rangle$.
The special point is $D: [0,0,1]$.  
The special lines are $\langle \alpha,\beta,0 \rangle$, where $\alpha,\beta \in \bbF_q$ (but not both 0); that is, they are $s_ \beta: \langle 1, \beta,0 \rangle$ for $\beta\in \bbF_q$ and $s_\infty: \langle 0,1,0 \rangle$.  
The ordinary points have coordinates $[x,y,z]$ with $x,y,z \in \bbF_{q^k}$ but $x,y$ not both 0; but they are more restricted because they are in $s_\infty \cup \bigcup_{\beta\in \bbF_q} s_\beta $.  Thus, either $y=0$ (the point is in $s_\infty$) or $x = -\beta y$ for some $\beta \in \bbF_q$ (the point is in $s_\beta$).
The ordinary lines $l$ are $\langle a,b,1 \rangle$ where $a,b \in \bbF_{q^k}$.  The third line coordinate is nonzero because $D \notin l$; we can standardize the line coordinates to end with 1 because they are homogeneous.

The common point of ordinary lines $l_1: \langle a_1,b_1,1 \rangle$ and $l_2: \langle a_2,b_2,1 \rangle$ is 
\begin{equation}\label{eq:commonpoint}
[x,y,z] = \big[-(b_2-b_1), a_2-a_1, \begin{vmatrix}a_1&a_2\\b_1&b_2\end{vmatrix}\big],
\end{equation} 
provided this point is in $\cP$.  
To prove this we take the difference $(a_1x+b_1y+z) - (a_2x+b_2y+z) = 0$, whose solution is $x = -\lambda(b_2-b_1)$, $y = \lambda(a_2-a_1)$ for $\lambda \in \bbF_{q^k}$.  Substituting for $x, y$ in $a_1x+b_1y+z = 0$ gives $z = \lambda \begin{vmatrix}a_1&a_2\\b_1&b_2\end{vmatrix}.$  That gives the intersection of $L_1$ and $L_2$, the extensions of $l_1$ and $l_2$ into $\Pi$.  The point \eqref{eq:commonpoint} is in $\cP$ if and only if $[x,y] = [x',y']$ for some $x',y' \in \bbF_q$, as we shall see in detail in the proof of Theorem \ref{srgq}.

\begin{thm}\label{srgq}
The graph of lines of a pseudo-projective rectangle $R(q,q^k)$ in $\PP(q^k)$ is isomorphic to the strongly regular graph $H_q(2,k)$.
\end{thm}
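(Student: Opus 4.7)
The plan is to mimic the proof of Theorem \ref{srg2}, replacing $\bbF_2$ by $\bbF_q$. Fix an $\bbF_q$-basis of the $k$-dimensional $\bbF_q$-vector space $\bbF_{q^k}^+$, and let $B : \bbF_{q^k} \to \bbF_q^k$ be the coordinate map (as a row vector), which is $\bbF_q$-linear. To each ordinary line $l = \langle a,b,1 \rangle$ of $R(q,q^k)$ associate the $2 \times k$ matrix
\[
M(l) := \begin{pmatrix} B(a) \\ B(b) \end{pmatrix}
\]
over $\bbF_q$. Since both the set of ordinary lines and the vertex set of $H_q(2,k)$ have cardinality $q^{2k}$, and $B$ is a bijection, the map $l \mapsto M(l)$ is a bijection.

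The main step is to show that adjacency corresponds. By $\bbF_q$-linearity of $B$,
\[
M(l_2) - M(l_1) = \begin{pmatrix} B(a_2 - a_1) \\ B(b_2 - b_1) \end{pmatrix}.
\]
This matrix has rank $\leq 1$ iff one of its rows is zero, or both rows are nonzero and $\bbF_q$-proportional. Translating back via $B$, the rank-$1$ condition becomes: either $a_1 = a_2$, or $b_1 = b_2$, or there exists $\beta \in \bbF_q^\times$ with $b_2 - b_1 = \beta(a_2 - a_1)$.

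I would then match these three alternatives to the geometric adjacency condition, using the formula \eqref{eq:commonpoint} for the common point $[x,y,z] = [-(b_2-b_1),\, a_2-a_1,\, \det]$ of the projective lines $L_1, L_2$ in $\Pi$. Adjacency in $G_\cL(R(q,q^k))$ means this point lies in $\cP = s_\infty \cup \bigcup_{\beta \in \bbF_q} s_\beta$, equivalently either $y = 0$ (the $s_\infty$ case, giving $a_1 = a_2$), or $x = -\beta y$ with $\beta \in \bbF_q$ (the $s_\beta$ case). If $y \neq 0$, the condition $x = -\beta y$ rewrites as $b_2 - b_1 = \beta (a_2 - a_1)$; the subcase $\beta = 0$ recovers $b_1 = b_2$. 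Thus the adjacency in $G_\cL(R(q,q^k))$ matches exactly the rank-$1$ condition for $M(l_2) - M(l_1)$, completing the isomorphism.

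The only subtlety, and the main place to be careful, is the direction showing that rank $1$ forces the intersection point to land in $\cP$ rather than in some other point of $\Pi$: one must verify that $\bbF_q$-proportionality of the rows of the difference matrix is equivalent, via the linear isomorphism $B$, to $b_2 - b_1 \in \bbF_q \cdot (a_2 - a_1)$ in $\bbF_{q^k}$, which is precisely the condition that the first two projective coordinates of the intersection point can be rescaled to lie in $\bbF_q$. This uses crucially that $B$ is an $\bbF_q$-linear (not merely additive) bijection; the $q=2$ proof hid this because scalars in $\bbF_2$ are trivial, but here the $\bbF_q$-linearity of $B$ does the essential work.
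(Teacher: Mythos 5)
Your proposal is correct and follows essentially the same route as the paper: associate to each ordinary line $\langle a,b,1\rangle$ the matrix with rows $B(a), B(b)$ over a fixed $\bbF_q$-basis, and match the rank-$1$ condition on the difference matrix with the case analysis of where the intersection point \eqref{eq:commonpoint} lands among the special lines. Your explicit remarks on the bijectivity of the vertex map and on the role of $\bbF_q$-linearity (rather than mere additivity) of $B$ are welcome refinements of the same argument.
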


\begin{proof}
Equation \eqref{eq:commonpoint} shows that two ordinary lines $l_1: \langle a_1,b_1,1 \rangle$ and $l_2: \langle a_2,b_2,1 \rangle$ have a common point in $s_0$ if and only if $b_1=b_2$, in $s_\infty$ if and only if $a_1=a_2$, and in $s_ \beta $ for $\beta \in \bbF_{q^k}^\times$ if and only if $a_2-a_1 = -\beta(b_2-b_1)$; otherwise they have no common point in $R$.  
Thus, in the graph of lines there are $q+1$ kinds of adjacency: $s_b$-adjacency for $i = 0,1,\infty$ depending on the location of the common point.  (This gives a factorization of the graph into $q+1$ $(q^k-1)$-regular subgraphs.)

For each ordinary line $l = \langle a,b,1 \rangle$ we construct a $2\times k$ matrix $M(l)$ over $\bbF_q$.  We assume a fixed $\bbF_q$-basis for the vector space $\bbF_{q^k}^+$ and denote by $B(x)$ the coordinate vector (a row vector) of $x \in \bbF_{q^k}$.  Then $M(l) := \begin{pmatrix} B(a) \\ B(b) \end{pmatrix}.$

The essential question is about the difference $M(l_2)-M(l_1)$.  We compute it:
$$
M(l_2)-M(l_1) = \begin{pmatrix} B(a_2)-B(a_1) \\ B(b_2)-B(b_1) \end{pmatrix} 
= \begin{pmatrix} B(a_2-a_1) \\ B(b_2-b_1) \end{pmatrix} .
$$
This matrix has rank 1 if and only if either a row is zero, or one row is a nonzero scalar multiple of the other, which over $\bbF_2$ means they are equal.  One row is zero if and only if $l_1 \cap l_2$ is a point in $s_0$ or $s_\infty$.  The rows are equal if and only if $a_2-a_1$ and $b_2-b_1$ are scalar multiples of each other by some $\lambda \in \bbF_q$ which means they have a common point in $s_{-\lambda}$.  Thus, two ordinary lines are adjacent if and only if their matrices are adjacent in $H_q(2,k)$, which is the theorem.
\end{proof}

\begin{thm}\label{thm:pr}
Every pseudo-projective rectangle is a projective rectangle.
\end{thm}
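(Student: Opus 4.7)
The plan is to reduce axiom (A\ref{Axiom:A6}) to a clique-theoretic assertion in the bilinear forms graph $H_q(2,k)$, using the isomorphism $G_\cL(R(q,q^k)) \cong H_q(2,k)$ established in Theorem \ref{srgq}.

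First I would classify the maximal cliques of $H_q(2,k)$ into two families. The \emph{column cliques}, of size $q^k$, have the form $M_0 + \langle u \rangle \otimes \bbF_q^k$ for fixed $0 \neq u \in \bbF_q^2$, and the \emph{row cliques}, of size $q^2$, have the form $M_0 + \bbF_q^2 \otimes \langle v \rangle$ for fixed $0 \neq v \in \bbF_q^k$. Every edge $\{M_1, M_2\}$ with $M_2 - M_1 = u v^T$ lies in exactly one column clique (direction $u$) and exactly one row clique (direction $v$). Under the isomorphism of Theorem \ref{srgq}, the column cliques correspond precisely to the point cliques of $R(q,q^k)$: the coordinate computation preceding Theorem \ref{srgq} shows that two ordinary lines $l_1, l_2$ meet on the special line $s_\beta$ iff $M(l_2) - M(l_1)$ is rank one with column space the direction in $\bbP^1(\bbF_q)$ specified by $\beta$.

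The key graph-theoretic lemma is that three pairwise adjacent vertices of $H_q(2,k)$ not contained in any common column clique must lie in a common row clique. Normalizing $M_1 = 0$ and writing $M_2 = u_{12} v_{12}^T$ and $M_3 = u_{13} v_{13}^T$, the hypothesis makes $\{u_{12}, u_{13}\}$ a basis of $\bbF_q^2$. Then $M_3 - M_2 = u_{13} v_{13}^T - u_{12} v_{12}^T$ has rank at most one only when $v_{12}$ and $v_{13}$ are linearly dependent; otherwise, expressing $M_3 - M_2$ in the bases $\{u_{12}, u_{13}\}$ and $\{v_{12}, v_{13}\}$ exhibits a nonsingular $2 \times 2$ submatrix. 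Hence $\{M_1, M_2, M_3\}$ lies in the row clique of direction $\langle v_{12} \rangle$.

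To verify (A\ref{Axiom:A6}), take $l_1 \cap l_2 = \{p\}$ and suppose $l_3, l_4$ each meet $l_1$ and $l_2$ in four distinct points. Then $l_3$ does not pass through $p$, so $\{l_1, l_2, l_3\}$ lies in no common point clique; by the key lemma they lie in a common row clique $K$. The same argument places $\{l_1, l_2, l_4\}$ in a row clique containing the edge $\{l_1, l_2\}$, which by uniqueness must equal $K$, so $l_4 \in K$. Therefore $l_3$ and $l_4$ both lie in $K$ and are consequently adjacent in $G_\cL$; translated back, $l_3 \cap l_4$ is a point of $\cP$, establishing (A\ref{Axiom:A6}). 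The main obstacle is the rank computation in the key lemma, which is elementary but pivotal; everything else is clique-bookkeeping inside $H_q(2,k)$.
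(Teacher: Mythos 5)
Your proof is correct, but it takes a genuinely different route from the paper's. The paper leans on the literature: it quotes the classification of the maximal cliques of $H_q(2,k)$ into two kinds, of orders $q^k$ and $q^2$ (Huang; Brouwer--Van Maldeghem), quotes the fact that every edge lies in a clique of the smaller kind, argues that such a $q^2$-clique together with the $q+1$ special lines is a projective plane of order $q$ sitting inside $R(q,q^k)$, and then invokes \cite[Theorem 6.7]{pr1}, which says that having a plane through every pair of intersecting ordinary lines forces (A\ref{Axiom:A6}). You instead verify (A\ref{Axiom:A6}) directly from a self-contained linear-algebra lemma --- a triangle of $2\times k$ matrices with pairwise rank-one differences either has a common column direction (so lies in a column clique, i.e.\ a point clique) or a common row direction (so lies in a row clique) --- combined with the uniqueness of the row clique on an edge. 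Your rank computation is sound: if $u_{12},u_{13}$ are independent and $v_{12},v_{13}$ were also independent, applying $u_{13}v_{13}^T-u_{12}v_{12}^T$ to vectors $w,w'$ with $v_{12}^Tw=1$, $v_{13}^Tw=0$, $v_{12}^Tw'=0$, $v_{13}^Tw'=1$ exhibits rank $2$; and the identification of column cliques with point cliques follows from the coordinate computation in Theorem \ref{srgq} together with the equal cardinalities $q^k$. What the paper's route buys is the additional information that the $q^2$-cliques are literally planes of $R$; what yours buys is independence from Huang's clique classification and from \cite[Theorem 6.7]{pr1}, plus a completely explicit check of the axiom. Two small points worth adding for completeness: in (A\ref{Axiom:A6}) the lines $l_3,l_4$ could a priori be special, in which case (A\ref{Axiom:A5}) settles the claim immediately; and it is the ``four distinct points'' hypothesis that guarantees both that neither $l_3$ nor $l_4$ contains $p$ and that $l_3\neq l_4$, both of which your argument uses.
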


\begin{proof}
Let $R$ be the pseudo-projective rectangle of order $(q,q^k)$; therefore, $G_\cL(R) \cong H_q(2,k)$.  
According to \cite[page 101]{bvm}, $H_q(2,k)$ has two kinds of maximal clique.  One kind has order $(q^k)^2$ and the other has order $q^2$ (proved in \cite[Lemma 2.2]{huang}).  The former kind is easily seen to be our point clique.  The latter kind has the same order as a plane clique, and the only way such a clique can exist in $G_\cL(R)$ is as a plane clique, because it requires a set of $q^2$ ordinary lines and $q+1$ special lines that are all mutually adjacent, which implies this incidence substructure is a trivial pseudo-projective plane of order $q$ with no two disjoint ordinary lines, hence (as we noted in Section \ref{sec:pr}) a projective plane of order $q$ and a plane of $R$.  
By the remarks just preceding Lemma 2.1 in \cite{huang}, every pair of adjacent vertices is in a clique of the second kind, which we now call a plane clique.  It then follows from \cite[Theorem 6.7]{pr1} that $G_\cL(R)$ is a projective rectangle.
\end{proof}

Theorem \ref{srgq} tells us that $G_\cL$ is not a new strongly regular graph if it is from a finite projective rectangle obtained by the subplane construction.  It does not imply that $G_\cL \cong H_q(2,k)$ for a finite projective rectangle obtained in a different way, if such exist.  We have no guess as to whether such different projective rectangles exist, but we do know that they must have $m$ equal to the order of a projective plane (because of the planes they contain), whatever that may imply.

A highly regular graph like $G_\cL$ suggests there may be a partial geometry (e.g., see \cite{vLW}) hiding in it, whose points are the vertices (the ordinary lines of $\PR$) and whose lines are maximal cliques.  For $G_\cL$ there are two clique types of different sizes, point cliques and plane cliques, suggesting two partial geometries.  For general projective rectangles, taking point cliques as lines gives a partial geometry that is a restatement of the net viewpoint in \cite[Section 9]{pr1}; for the subplane construction in particular, i.e., the graph $H_q(2,k)$, it is mentioned in \cite[Section 3.4.1]{bvm}.  Taking plane cliques for lines does not give a partial geometry but for $H_q(2,k)$ this incidence structure is also mentioned in \cite[Section 3.4.1]{bvm}, where it is called a semi-partial geometry.  

\begin{qn}\label{qn:pg-spg}
Do other finite projective rectangles, if they exist, give new partial geometries and semi-partial geometries?
\end{qn}

\sectionpage
\section {Graph properties}\label{sec:properties}

We present some elementary graph properties of $G_\cL$.

\begin{prop}\label{Coro:graph} Let $G_\cL(\PR)$ be the graph of lines of a projective rectangle of order $(m,n)$.
 \begin{enumerate}[{\rm(a)}]
 \item\label{Coro:graph:a}  $G_\cL(\PR)$ is nonplanar, except when $m=n=2$.
 \item\label{Coro:graph:b}  $G_\cL(\PR)$ has an Eulerian circuit if, and only if, $m$ or $n$ is odd.
 \item\label{Coro:graph:c} If $n \leq 3m+1$, then $G_\cL(\PR)$ is Hamiltonian.
 \end{enumerate}
\end{prop}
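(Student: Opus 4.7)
Parts (a) and (b) look routine. For (a), I would split by triviality. When $\PR$ is trivial, Theorem \ref{prop:graph}\eqref{diameterG} gives $G_\cL(\PR) = K_{n^2}$, which is planar iff $n^2 \leq 4$, capturing the exception $m = n = 2$. Otherwise $\PR$ is nontrivial so Theorem \ref{thm:constraint} yields $n \geq m^2 \geq 4$; hence the regular degree $r = (m+1)(n-1) \geq 9$, giving $|E(G_\cL)| = n^2 r/2 \geq 9n^2/2 > 3 n^2 - 6$, in violation of the edge bound for planar simple graphs. For (b) the graph is connected (Theorem \ref{prop:graph}\eqref{diameterG}), so Euler's criterion applies: an Eulerian circuit exists iff every degree is even. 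Since $G_\cL(\PR)$ is $(m+1)(n-1)$-regular, this happens iff one of $m+1, n-1$ is even, i.e., iff $m$ or $n$ is odd.

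The substance is in (c). My plan is to invoke Jackson's theorem, which asserts that every $2$-connected $k$-regular graph on at most $3k$ vertices is Hamiltonian. By Theorem \ref{prop:graph}\eqref{connetedG} the connectivity of $G_\cL(\PR)$ is $(m+1)(n-1)\geq 2$, so $G_\cL(\PR)$ is $2$-connected. Setting $k = (m+1)(n-1)$ and $\nu = n^2$, Jackson's hypothesis $\nu \leq 3k$ reads
\[
n^2 \leq 3(m+1)(n-1), \quad\text{i.e.,}\quad f(n):= n^2 - 3(m+1)n + 3(m+1) \leq 0.
\]
This quadratic in $n$ opens upward; its smaller root lies strictly below $2$ (a quick check using $m \geq 2$), and $f(3m+1) = (3m+1)^2 - 3(m+1)(3m) = 1 - 3m \leq 0$. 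Hence $f(n)\leq 0$ for every integer $2 \leq n \leq 3m+1$, and Jackson's theorem delivers the desired Hamilton cycle.

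The key difficulty in (c) is recognizing which Hamiltonicity criterion to deploy: Dirac's minimum-degree bound already fails around $n = 2m+2$, while an explicit construction of a Hamilton cycle in a general $\PR$ would have to navigate the interplay of point cliques and plane cliques without an obvious global ordering. Jackson's $1980$ theorem is tailored precisely to dense regular graphs whose vertex count is not too large compared to the degree, and the quadratic estimate above shows that $n \leq 3m+1$ is exactly the integer range where $\nu \leq 3k$ kicks in, which is why the stated bound is the natural one for this approach.
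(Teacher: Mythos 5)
Your proposal is correct and follows essentially the same route as the paper: part (a) is a degree/edge-count planarity obstruction with the $K_4$ exception at $m=n=2$, part (b) is Euler's criterion applied to a connected regular graph, and part (c) invokes Jackson's theorem after verifying $n^2 \le 3(m+1)(n-1)$ (your quadratic check of $f(2)<0$ and $f(3m+1)=1-3m\le 0$ is just a slightly different way of establishing the same inequality the paper derives algebraically).
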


\begin{proof} 
Proof of Part (\ref{Coro:graph:a}).  A regular graph is planar only if it has degree at most 5.  To satisfy this, by Theorem \ref{prop:graph} Part \eqref{regularG} we must have $(m+1)(n-1)\leq5$ while $n\geq m \geq 2$.  The only solution is $m=n=2$.  In this case $G_\cL(\PR)$ is the intersection graph of the lines in the Fano plane that do not contain a certain point; this graph is $K_4$, which is planar.

Part (\ref{Coro:graph:b}) is straightforward from Theorem \ref{prop:graph} Parts \eqref{regularG} and \eqref{connetedG}.

Proof of Part (\ref{Coro:graph:c}).  $G_\cL(\PR)$ has $n^2$ vertices.  By
Theorem \ref{prop:graph} Part \eqref{connetedG} we know that  $G_\cL(\PR)$ is $2$-connected. We know that $n>1$ and that $n+1\le 3(m+1)$, therefore
$n+1+1/(n-1) \le 3(m+1)$. This implies that $n^2 \le 3(m+1)(n-1)$.  
(Jackson \cite{jackson} proved that every $2$-connected $k$-regular graph on at most $3k$ vertices is Hamiltonian.) 
Since $G_\cL(\PR)$ is $(m+1)(n-1)$-regular, the conclusion follows.
\end{proof}

Regarding coloring, we have two coloring problems.

\begin{prop}\label{prop:chromaticGlines}
The chromatic number of a nontrivial projective rectangle $\PR$ of order $(m,n)$ satisfies $\chi(G_\cL(\PR)) \geq (n-1)(n-m)$.
\end{prop}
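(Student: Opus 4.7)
The plan is to bound $\chi(G_\cL(\PR))$ from below via the standard inequality $\chi(G)\geq \lvert V(G)\rvert/\alpha(G)$, which reduces the problem to controlling the independence number $\alpha(G_\cL(\PR))$ from above. An independent set in $G_\cL(\PR)$ is exactly a parallel class, i.e., a set of pairwise disjoint ordinary lines of $\PR$.

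First, I would prove the clean upper bound $\alpha(G_\cL(\PR))\leq n$ by a direct point count: each ordinary line carries $m+1$ ordinary points, the lines in a parallel class are pairwise point-disjoint, and $\PR$ has only $(m+1)n$ ordinary points (Section~\ref{sec:pr}). The same estimate is delivered by Hoffman's ratio bound applied to the spectrum in Theorem~\ref{prop:graph}(\ref{eigenvaluesG}). Taken by itself this yields only the weak conclusion $\chi\geq n^2/n = n$.

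The heart of the proof is to sharpen this to $(n-1)(n-m)$, which I would approach by exploiting the observation that this quantity is $\nu - 1 - r$ (with $r$ the regularity of Theorem~\ref{prop:graph}(\ref{regularG})), i.e., the common non-degree of $G_\cL(\PR)$; equivalently, it is the number of ordinary lines disjoint from any fixed ordinary line $l$. Fix such an $l$: in a proper coloring the color of $l$ is used only on lines disjoint from $l$, while every neighbor of $l$ must take a different color from $l$. The plan is to count how many distinct colors must appear on the set of $(n-1)(n-m)$ non-neighbors of $l$, by partitioning these non-neighbors according to which planes of $\PR$ they share with lines through $l$ and invoking the plane-clique structure of Theorem~\ref{prop:graph}(\ref{cliqueinG}) together with the enumerations in Theorem~\ref{thm:counting} and the incidence axioms (A\ref{Axiom:A5})--(A\ref{Axiom:A6}).

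The main obstacle is that the set of non-neighbors of $l$ is not itself a clique---two ordinary lines both disjoint from $l$ may be disjoint from one another---so the bound $(n-1)(n-m)$ cannot be read off a clique-number argument. The work lies in producing a matching between color classes and carefully chosen subsets of this non-neighborhood forced by the axioms, or else in a refined spectral argument strengthening the Hoffman bound above its naive output of $n$.
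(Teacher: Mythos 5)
Your first step is sound: an independent set in $G_\cL(\PR)$ is a set of pairwise disjoint ordinary lines, the count of ordinary points gives $\alpha(G_\cL(\PR))\le n$, and hence $\chi(G_\cL(\PR))\ge n^2/n=n$, in agreement with the Hoffman ratio bound $1-\tau_0/\tau_2=n$. But the ``heart of the proof'' you outline is never carried out, and the gap cannot be closed, because the inequality $\chi(G_\cL(\PR))\ge(n-1)(n-m)$ is false. Take the paper's own Example \ref{TensorProduct4}: for $L_2^2$ we have $m=2$, $n=4$, and $G_\cL(L_2^2)\cong K_4\times K_4$, which is properly $4$-colorable (color $(i,j)$ by $i$: adjacent vertices differ in both coordinates), whereas the proposition demands $\chi\ge 3\cdot 2=6$. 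Concretely, the Cayley table of $\bbZ_2^2$ splits into four disjoint transversals, which partition the $16$ ordinary lines into four independent sets such as $\{l_0,l_6,l_{11},l_{13}\}$. More generally $\chi(H_q(2,k))=q^k=n$: color the ordinary line $\langle a,b,1\rangle$ by $ca+b$ for a fixed $c\in\bbF_{q^k}\setminus\bbF_q$; by the proof of Theorem \ref{srgq}, adjacency forces $a_2-a_1$ and $b_2-b_1$ to be $\bbF_q$-proportional and not both zero, which is incompatible with $c(a_2-a_1)=-(b_2-b_1)$ unless both differences vanish. So for every rectangle arising from the subplane construction the truth is $\chi=n$, exactly what your independence-number argument delivers and far below $(n-1)(n-m)$.

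You should also know that the paper's own proof does not establish the claim. It quotes Haemers' inequality $\chi(G)\ge\min(\mu_2,\,1-\tau_2/\tau_1)$ and then verifies $1-\tau_2/\tau_1=n/(n-m-1)<n\le(n-1)(n-m)=\mu_2$; but that computation shows the minimum is the \emph{other} term, so the quoted inequality yields only $\chi\ge n/(n-m-1)$, which is weaker even than your $\chi\ge n$. Your instinct that the non-neighborhood count $(n-1)(n-m)=\nu-1-r$ has no a priori connection to the chromatic number is exactly right, and the ``matching between color classes and subsets of the non-neighborhood'' you propose cannot exist. The correct and provable statement in this direction is $\chi(G_\cL(\PR))\ge n$, with equality in all known examples, and that is precisely what the part of your argument you did complete already proves.
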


\begin{proof}
Haemers \cite{haem} proved that $\chi(G) \geq \min(\mu_2, 1-\tau_2/\tau_1)$ (in our notation), where $\mu_2$ denotes the multiplicity of the smallest eigenvalue $\tau_2$.  Since 
$$1-\frac{\tau_2}{\tau_1} = \frac{n}{n-m-1} \leq \frac{n}{m^2-m-1} < n$$
by Theorem \ref{thm:constraint} and $m\geq 2$, the result follows from Haemers' lower bound.
\end{proof}

This lower bound is much bigger than the maximum clique size $n$, because (by Theorem \ref{thm:constraint}) $n-m \geq m^2 - m = m(m-1) \geq 2$.  Thus, cliques tell us nothing about the chromatic number.  We propose:

\begin{conj} Equality holds in Proposition \ref{prop:chromaticGlines}.
\end{conj}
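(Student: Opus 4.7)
The plan is to establish the matching upper bound $\chi(G_\cL(\PR)) \leq (n-1)(n-m)$ by constructing an explicit proper coloring with that many colors, thereby promoting the Haemers lower bound of Proposition \ref{prop:chromaticGlines} to an equality. Since the lower bound is spectral in origin, an extremal coloring should respect the eigenspace decomposition: the color classes must be independent sets of average size $n^2/[(n-1)(n-m)] \approx n/(n-m)$, which is generally less than $2$, although a handful may be maximum independent sets saturating the ratio bound $\alpha \leq n$.

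First I would verify the conjecture directly on the smallest nontrivial case, $G_\cL(L_2^2) \cong H_2(2,2)$ from Example \ref{TensorProduct4}, where the target value is $(n-1)(n-m) = 6$; the $16$-vertex graph is small enough for a combinatorial or computational check, and the outcome should reveal the shape of the extremal color classes. For the subplane rectangles $R(q,q^k)$, I would then exploit the matrix realization from Theorem \ref{srgq}: vertices are $2 \times k$ matrices over $\bbF_q$, and an independent set is a collection of matrices whose pairwise differences have rank $2$ (a rank-metric anti-code). The plan is to partition $\bbF_q^{2\times k}$ into $(q^k-1)(q^k-q)$ such anti-codes by leveraging the $(q+1)$-factorization of $G_\cL(R(q,q^k))$ into $(q^k-1)$-regular subgraphs indexed by the location of the common point among $s_\infty$ and the $s_\beta$, $\beta \in \bbF_q$ (as in the proof of Theorem \ref{srgq}), and combining colorings of the individual factors by a Kirkman-style scheduling. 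Finally, for a general projective rectangle, I would transcribe the construction into purely incidence-geometric language via the plane cliques, point cliques, and the partial geometry of Section \ref{sec:partialgeometry}.

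The main obstacle is that $(n-1)(n-m)$ far exceeds both the clique number $n$ and any bound one obtains from clique covers or standard Hoffman arguments, so the coloring must be highly non-uniform, with most color classes consisting of only one or two vertices while very few attain the ratio bound. A further difficulty is that the upper bound must hold for every projective rectangle of order $(m,n)$, even those whose graphs of lines need not be isomorphic to $H_q(2,k)$ (an open possibility noted after Theorem \ref{srgq}), so any construction must use only the axioms and the combinatorial data of Theorems \ref{thm:counting} and \ref{prop:graph}. If the verification in the $(2,4)$ case succeeds cleanly, I expect the subplane construction to provide a template that can be generalized by Latin-square-like substructures on the partial geometry; if it does not, the conjecture and the lower bound both warrant re-examination before the construction phase is attempted in earnest.
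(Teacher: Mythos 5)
The statement you set out to prove is labeled a \emph{conjecture} in the paper: no proof is given there, so there is nothing to compare your approach against. More to the point, what you have submitted is a research programme rather than a proof. No proper coloring with $(n-1)(n-m)$ colors is actually constructed for any projective rectangle; the ``Kirkman-style scheduling'' that is supposed to combine the $q+1$ factors of $G_\cL(R(q,q^k))$ is never specified; and the passage from the subplane rectangles to arbitrary projective rectangles is left entirely to a future ``transcription.'' As written, nothing is established.

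The more serious problem is that carrying out the very first step of your own plan refutes the conjecture. For $L_2^2$ we have $(m,n)=(2,4)$, so the conjectured value is $(n-1)(n-m)=6$; but by the paper's Example \ref{TensorProduct4}, $G_\cL(L_2^2)\cong K_4\times K_4$ (the categorical product, i.e., the complement of the $4\times 4$ rook's graph), whose chromatic number is $4$. Explicitly, color the ordinary line $\{a_g,b_{g+h},c_h\}$ of Example \ref{ex:L2k} with $\sigma(g)+h\in\bbZ_2^2$, where $\sigma$ is the automorphism of $\bbZ_2^2$ given by multiplication by a generator of $\bbF_4^\times$; lines sharing an $a$-, $b$-, or $c$-point then receive distinct colors because $\sigma$ fixes only $0$. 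Since the clique number is $n=4$, this gives $\chi(G_\cL(L_2^2))=4<6$. The discrepancy originates in Proposition \ref{prop:chromaticGlines} itself: Haemers' bound is $\min(\mu_2,\,1-\tau_2/\tau_1)$, and here $1-\tau_2/\tau_1=n/(n-m-1)$ is the \emph{smaller} of the two terms (it equals $4$ for $L_2^2$), so the conclusion $\chi\ge\mu_2=(n-1)(n-m)$ does not follow; the spectral bound one actually gets is only $n/(n-m-1)$, which is weaker than the trivial bound $\chi\ge\omega=n$. Your closing caveat---that a failed verification means the conjecture and the lower bound both warrant re-examination---is the branch that actually occurs, and the construction phase of your plan should not be attempted until the lower bound is repaired.
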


The chromatic index of an $r$-regular graph is $r$ or $r+1$ by Vizing's Theorem.  It cannot be $r$ if the graph has odd order $\nu$.  Ferber and Jain give a sufficient condition for the chromatic index to equal the degree, from which we derive an asymptotically valid conclusion.

\begin{prop}\label{prop:chromaticindexGlines}
Let $\PR$ be a nontrivial projective rectangle.  If $n$ is odd, the chromatic index satisfies $\chi'(G_\cL(\PR)) = r+1 = (m+1)(n-1)+1$.
If $n$ is even and sufficiently large and $m+1 \geq \sqrt[9]{n-1}$, then $\chi'(G_\cL(\PR)) = r = (m+1)(n-1)$.
\end{prop}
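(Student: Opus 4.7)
The plan is to treat the two parities of $n$ separately. In both cases Vizing's theorem supplies the upper bound $\chi'(G_\cL(\PR)) \leq r+1$, so the content lies in deciding whether a proper edge-coloring with exactly $r$ colors — equivalently, a 1-factorization — exists.

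If $n$ is odd, the plan is a short parity argument. The number of vertices is $\nu = n^2$, which is odd. In any proper edge-coloring of an $r$-regular graph using only $r$ colors, each vertex is incident to one edge of every color, so each color class saturates every vertex and is therefore a perfect matching; this forces $\nu$ to be even. The resulting contradiction gives $\chi'(G_\cL(\PR)) > r$, and Vizing's upper bound matches to yield $\chi'(G_\cL(\PR)) = r+1$.

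If $n$ is even, then $\nu = n^2$ is also even and a 1-factorization becomes plausible. The plan is to invoke the Ferber--Jain theorem on 1-factorizations of dense regular graphs, which states that for all sufficiently large even $N$, every $d$-regular graph on $N$ vertices with $d$ at least a prescribed fractional power of $N$ admits a 1-factorization. Applying this to $G_\cL(\PR)$ with $N = \nu = n^2$ and $d = r = (m+1)(n-1)$, I would check that the hypothesis $m+1 \geq \sqrt[9]{n-1}$ is precisely calibrated — via $\nu = n^2$ — to exceed the Ferber--Jain degree threshold once $n$ is large. The resulting 1-factorization is a proper edge-coloring with $r$ colors, and together with the trivial lower bound $\chi' \geq \Delta = r$ this gives $\chi'(G_\cL(\PR)) = r$.

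The main obstacle is entirely in the even case, and it is arithmetic rather than conceptual: one must align the fractional exponent appearing in the Ferber--Jain hypothesis (expressed in terms of $\nu$) with the ninth-root expression in terms of $n-1$ in the statement, using $r = (m+1)(n-1)$ and $\nu = n^2$. Once this calibration is made explicit, the rest is a black-box application of their result. The odd case, by contrast, is essentially automatic once one notices that $\nu = n^2$ inherits the parity of $n$.
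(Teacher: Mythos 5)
Your odd case is fine: you have made explicit the standard parity argument (each of $r$ color classes in an $r$-regular graph would be a perfect matching, impossible when $\nu=n^2$ is odd) that the paper compresses into ``follows from Vizing's Theorem.''

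The even case has a genuine gap. You describe the Ferber--Jain theorem as a degree-threshold result — ``every $d$-regular graph on $N$ vertices with $d$ at least a prescribed fractional power of $N$ admits a 1-factorization'' — and plan to calibrate $m+1\geq\sqrt[9]{n-1}$ against that threshold via $\nu=n^2$. No such theorem exists, and it could not: under your hypothesis one only gets $r=(m+1)(n-1)\geq(n-1)^{10/9}\approx\nu^{5/9}$, whereas since $n\geq m^2$ (Theorem \ref{thm:constraint}) the ratio $r/\nu\approx(m+1)/n\leq(m+1)/m^2$ can be arbitrarily small, far below the $\approx\nu/2$ degree threshold of the 1-factorization theorem for dense regular graphs; and below that threshold regular graphs of even order need not be class 1 without further assumptions. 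The Ferber--Jain result is a \emph{pseudorandomness} (spectral gap) theorem: in the form quoted in \cite[Theorem 1.1]{cioaba}, an $r$-regular graph on sufficiently many vertices of even order is 1-factorizable provided $\max(\tau_1,-\tau_2)<r^{0.9}$, where $\tau_1,\tau_2$ are the nontrivial eigenvalues. So the step you are missing is the eigenvalue computation: by Theorem \ref{prop:graph} Part \eqref{eigenvaluesG}, $\tau_1=n-m-1$ and $\tau_2=-(m+1)$, hence $\max(\tau_1,-\tau_2)=n-m-1$, and the condition to verify is $n-m-1<\bigl((m+1)(n-1)\bigr)^{0.9}$. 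Since $n-m-1\leq n-1$, it suffices that $n-1\leq(m+1)^{0.9}(n-1)^{0.9}$, i.e.\ $(n-1)^{0.1}\leq(m+1)^{0.9}$, i.e.\ $m+1\geq\sqrt[9]{n-1}$ — this, and not any comparison of $r$ with a power of $\nu$, is what the ninth root in the hypothesis is calibrated to. Your argument as written would stall at the calibration step; replace the degree-threshold reading with the spectral condition and the proof goes through.
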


\begin{proof}
If $n$ is odd, $\nu = n^2$ is odd, so the value of $\chi'$ follows from Vizing's Theorem.  

If $n$ is even, $\nu$ is even.  Ferber and Jain \cite{ferb} (as quoted in \cite[Theorem 1.1]{cioaba}) proved that $\chi'(G) = r$ if $G$ is an $r$-regular graph with $\nu$ vertices, where $\nu$ and $r$ are sufficiently large, and provided that (in our notation) $\max(\tau_1,-\tau_2) < r^{0.9}$.  In $G_\cL(\PR)$, since $m\geq 2$, if $n$ is sufficiently large then $\nu$ and $r$ will be sufficiently large.  Also, $\max(\tau_1,-\tau_2) = \tau_1 = n-m-1 > m+1 = -\tau_2$, so we need to have $n-m-1 < (m+1)^{0.9}(n-1)^{0.9}$.  Since $n-m-1 \leq n-3$, it is sufficient to have $n-1 \leq (m+1)^{0.9}(n-1)^{0.9}$, which simplifies to $n-1 \leq (m+1)^9$.
\end{proof}

This result misses many projective rectangles.  For instance, if $\PR$ is obtained by the subplane construction in the projective plane $\mathbb{PP}(\bbF_{2^k})$ using a subplane $\mathbb{PP}(\bbF_{2^j})$ where $j<k$, then $n=2^k$ and $m=2^j$.  The 9-th root inequality asks that $(2^j+1)^9 \geq 2^k-1$ to apply Proposition \ref{prop:chromaticindexGlines}.  A sufficient condition is that $j \geq k/9$; and $k$ must be sufficiently large.  
Thus, we ask:

\begin{qn}
Is $\chi'(G_\cL(\PR)) = r = (m+1)(n-1)$ for every nontrivial projective rectangle with even $n$?
\end{qn}

\sectionpage
\section {One and one-half partial geometries}\label{sec:partialgeometry}

The high regularity of the graph of lines of a finite projective rectangle $\PR$ suggests there might be a partial geometry hiding in it.  
A partial geometry pg$(k,r,t)$ is a system of Points and Lines in which each Line has $k$ Points, each Point belongs to $r$ Lines, and for each Point-Line pair $(P_0,L_0)$ with $P_0 \notin L_0$ there are $t$ Lines on $P_0$ that intersect $L_0$.  (Note the capitalization of Points and Lines in the partial geometry to distinguish them from points and lines in $\PR$.)

An obvious way to build a partial geometry from a graph is to take its vertices as points and some of its maximal cliques as lines.  Having two sizes of maximal clique in $G_L$ suggests two possible partial geometries.  

\begin{exam}[Using point cliques]
This partial geometry is mentioned in \cite[Section 3.4.1]{bvm} for the graphs $H_q(2,k)$.  Taking a Point to be an ordinary line of $\PR$ and a Line to be the set of lines in a point clique does yield a partial geometry.  Consider a Point-Line pair $(P_0,L_0)$ with $P_0 \notin L_0$.  The number $t(P_0,L_0)$ of Lines $L$ on $P_0$ that intersect $L_0$ should be a constant.  

Let $L_0$ be the point clique of all ordinary lines on an ordinary point $p_0$ in special line $s_0$ and $P_0$ any line $l_0$ not containing $p_0$.  For every point $p$ on $l_0$ that is not in $s_0$, there is one line $\overline{p_0p}$ in $L_0$; therefore, $t(P_0,L_0) = m$ and we have a partial geometry pg$(m+1,n,m)$ with $n^2$ Points, $mn$ Lines, $n$ Points per Line, and $m+1$ Lines on each Point.  

This partial geometry is essentially a dualization of $\PR$ and as such is a restatement of the net viewpoint in \cite[Section 9]{pr1}.
\end{exam}

\begin{exam}[Using plane cliques]
Taking a Point to be an ordinary line of $\PR$ and a Line to be the set of lines in a plane clique does not yield a partial geometry.  Consider a Point-Line pair $(P_0,L_0)$ with $P_0 \notin L_0$.  There are two kinds of such Point-Line pairs $(P_0,L_0)$.  The Point $P_0=l_0$, which an ordinary line of $\PR$, and the Lines $L_0 = \pi_0$, a plane, may be disjoint point sets in $\PR$, or they may have one common point.  In the former case $t(P_0,L_0) = 0$ and in the latter case $t(P_0,L_0) = m$, because the Line $L$ must be a plane that contains both $l_0$ and a line in $\pi_0$.  (This incidence structure from $H_q(2,k)$ is called a semi-partial geometry in \cite[Section 3.4.1]{bvm}.)
\end{exam}

\sectionpage

\end{document}